 \newtheorem{theorem}{Theorem}[section]
 \newtheorem{proposition}[theorem]{Proposition}
 \newtheorem{lemma}[theorem]{Lemma}
 \newtheorem{corollary}[theorem]{Corollary}
 \theoremstyle{definition}
 \newtheorem{definition}[theorem]{Definition}
 \newtheorem{examples}[theorem]{Examples}
\begin{document}

    \title{Module decompositions using pairwise comaximal ideals}

    \author{Gary F. Birkenmeier}
    \address{Department of Mathematics, University of Louisiana at Lafayette, Lafayette, LA 70504 USA}
    \email{gfb1127@louisiana.edu}

    \author{C. Edward Ryan}
    \address{Department of Mathematics, University of Louisiana at Lafayette, Lafayette, LA 70504 USA}
    \email{cxr2665@louisiana.edu}

\subjclass[2010]{16D70}

\keywords{ring; module; decomposition; annihilator; comaximal ideal}

\begin{abstract}
In this paper we show that for a given set of pairwise comaximal ideals $\{X_i\}_{i\in I}$ in a ring $R$ with unity and any right $R$-module $M$ with generating set $Y$ and $C(X_i)=\sum\limits_{k\in\mathbb{N}}\underline{\ell}_M(X_i^{k})$, $M=\oplus_{i\in I}C(X_i)$ if and only if for every $y\in Y$ there exists a nonempty finite subset $J\subseteq I$ and positive integers $k_j$ such that $\bigcap\limits_{j\in J}X_i^{k_j}\subseteq\underline{r}_R(yR)$. We investigate this decomposition for a general class of modules. Our main theorem can be applied to a large class of rings including semilocal rings $R$ with the Jacobson radical of $R$ equal to the prime radical of $R$, left (or right) perfect rings, piecewise prime rings, and rings with ACC on ideals and satisfying the right AR property on ideals. This decomposition generalizes the decomposition of a torsion abelian group into a direct sum of its p-components. We also develop a torsion theory associated with sets of pairwise comaximal ideals.
\end{abstract}

\maketitle

\section{Introduction}

Throughout this paper $R$ denotes a ring, not necessarily commutative, with identity, and $M$ denotes a unital right $R$-module.

Recall if $M$ is a torsion abelian group, then $M=\oplus C(P_i)$, where $C(P_i)$ is the $p$-component of $M$. Also, if $R$ is semisimple Artinian, then $M=\oplus\underline{\ell}_M(P_i)$, where $P_i$ is a maximal ideal of $R$ and $\underline{\ell}_M(P_i)=\{m\in M\mid mP_i=0\}$ is the homogeneous component of $P_i$ in $M$. It is natural to investigate a general decomposition theory that includes the aforementioned decomposition results as special cases. In Section~\ref{sec:main} we provide such a result.

In Section~\ref{sec:main} we state a decomposition theorem which provides a decomposition of a module as a direct sum of fully invariant submodules. Our main result, Theorem~\ref{thm:infinite}, decomposes a right $R$-module $M$ to a direct sum in terms of annihilator submodules using a set of pairwise comaximal ideals of $R$.

Section~\ref{sec:nilary} extends the primary decomposition of a finitely generated torsion module over a Dedekind domain to certain kinds of noncommutative rings (cf. Theorem~\ref{thm:nilarydecomposition}).

We develop a preradical $\gamma$ and its radical closure $\bar{\gamma}$ based on Theorem~\ref{thm:infinite} in Section~\ref{sec:torsion}. Our main goal in this section is to obtain a decomposition of a module into a direct sum of a torsion module and a torsion-free module (cf. Proposition~\ref{prop:splitting}) under $\gamma$.

We write $K\subseteq M$ and $K\leq M$ to denote subsets and submodules of $M$, respectively. We say that a submodule $N\leq M$ is \textit{essential} in $M$, denoted $N\mathop{\leq^{\rm ess}\!}M$, if $N\cap K\neq0$ for any nonzero submodule $K\leq M$. A submodule $N\leq M$ is \textit{fully invariant} in $M$, denoted $N\trianglelefteq M$, if and only if $f(N)\subseteq N$ for every $f\in\rm{End}_R(M)$, where $\rm{End}_R(M)=\{h:M\longrightarrow M\mid h\text{ is an }R\text{-homomorphism}\}$. If $X\subseteq R$, then the \textit{left annihilator} of $X$ in $M$ is $\underline{\ell}_M(X)=\{m\in M\mid mx=0\text{ for all }x\in X\}$. If $N\subseteq M$, then the \textit{right annihilator} of $N$ in $R$ is $\underline{r}_R(N)=\{r\in R\mid nr=0\text{ for all }n\in N\}$. The \textit{singular submodule} of $M$, denoted by $Z(M)$, is $Z(M)=\{m\in M\mid\underline{r}_R(m)\mathop{\leq^{\rm ess}\!}R\}$.

As in \cite[p.123]{Row-1991}, we say a nonempty set of ideals $\{X_i\}_{i\in I}$ of $R$ is \textit{pairwise comaximal} if and only if $X_i+X_j=R$ for all $i,j\in I$ with $i\neq j$.

\section{Decomposition Theorem}\label{sec:main}

In this section we develop a decomposition theorem, Theorem~\ref{thm:infinite}, which provides a basic decomposition of a module into a direct sum of fully invariant annihilator submodules. This result also generalizes the well known result that a torsion abelian group is a direct sum of its p-components.

We make repeated and implicit use of the following lemma, which lists some properties of a finite collection of pairwise comaximal ideals of a ring. These properties are well known.

\begin{lemma}\label{lem:props}
\begin{enumerate}
\item If $\{X_i\}_{i=1}^n$, $n>1$, is a set of pairwise comaximal ideals of $R$, then $\sum\limits_{i=1}^n\bigl(\bigcap\limits_{j\neq i}X_j\bigr)=R=\sum\limits_{i=1}^n\bigl(\prod\limits_{j\neq i}X_j\bigr)$.
\item If $\{X_i\}_{i=1}^n$ is a set of pairwise comaximal ideals of $R$, then $X_i+\bigcap\limits_{j\neq i}X_j=R$. In particular, if each $X_i\neq R$, then $\bigcap\limits_{j\neq i}X_j\neq0$ for all $i$.
\item If $\{Y_i\}_{i=1}^n$ is a set of ideals of $R$ such that $\sum\limits_{i=1}^nY_i=R$, and we define $X_i=\sum\limits_{i\neq j}Y_j$, then $\bigcap\limits_{i=1}^nX_i=\sum\limits_{\sigma\in S_n}X_{\sigma(1)}X_{\sigma(2)}\dotsb X_{\sigma(n)}$, where $S_n$ is the set of permutations on $n$ letters.
\item If $X,Y\trianglelefteq R$ are such that $X+Y=R$, then $X^m+Y^n=R$ for each pair of positive integers $m,n$.
\end{enumerate}
\end{lemma}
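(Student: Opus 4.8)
The plan is to treat part (4) on its own by a short two-variable induction, and to derive parts (1)--(3) from a single construction, using (1) as the workhorse. For (4) I would first isolate the step: if $A+B=R$ for two-sided ideals $A,B$, then $A+B^2=R$ and, symmetrically, $A^2+B=R$. This follows from $R=R^2=(A+B)(A+B)=A^2+AB+BA+B^2$, since $A^2,AB,BA\subseteq A$ (because $A$ is two-sided), so that $R\subseteq A+B^2$; the symmetric version is identical. Iterating the first statement gives $X+Y^n=R$ for all $n$, and then iterating the second with $X$ in place of $A$ and $Y^n$ in place of $B$ gives $X^m+Y^n=R$.

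For (1), for each $i$ and each $j\neq i$ I would use $X_i+X_j=R$ to pick $a^{(i)}_j\in X_j$ and $c^{(i)}_j\in X_i$ with $a^{(i)}_j+c^{(i)}_j=1$, and then set $e_i=\prod_{j\neq i}a^{(i)}_j$, the product taken in increasing order of $j$. Then $e_i\in\prod_{j\neq i}X_j\subseteq\bigcap_{j\neq i}X_j$, while expanding $e_i=\prod_{j\neq i}(1-c^{(i)}_j)$ shows $1-e_i\in X_i$, because every term of that expansion other than $1$ contains a factor lying in the two-sided ideal $X_i$. Next I would expand $\prod_{i=1}^{n}(1-e_i)$, again in increasing order of $i$: this product lies in $X_1X_2\cdots X_n\subseteq\prod_{j\neq 1}X_j$, and it also equals $1$ plus terms each of which begins with some $e_i$ and hence lies in the right ideal $\prod_{j\neq i}X_j$. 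Therefore $1\in\sum_{i=1}^{n}\prod_{j\neq i}X_j\subseteq\sum_{i=1}^{n}\bigcap_{j\neq i}X_j\subseteq R$, giving both equalities in (1).

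Part (2) then follows at once: fixing $i$, every summand $\bigcap_{j\neq k}X_j$ with $k\neq i$ is contained in $X_i$, so $R=\bigcap_{j\neq i}X_j+X_i$; and if all $X_i\neq R$ then $\bigcap_{j\neq i}X_j\neq 0$, since otherwise $X_i=R$. For (3) I would first observe that the $X_i$ are pairwise comaximal, because $X_i+X_k\supseteq\sum_\ell Y_\ell=R$ when $i\neq k$. The inclusion $\sum_{\sigma\in S_n}X_{\sigma(1)}\cdots X_{\sigma(n)}\subseteq\bigcap_i X_i$ is clear, each such product having every $X_i$ as a two-sided factor. For the reverse inclusion I would invoke (1) in its ordered-product form to write $1=\sum_i w_i$ with $w_i\in X_1\cdots X_{i-1}X_{i+1}\cdots X_n$; then for $z\in\bigcap_i X_i$ we get $z=\sum_i zw_i$, with $zw_i\in X_i\cdot X_1\cdots X_{i-1}X_{i+1}\cdots X_n$, which is one of the summands $X_{\sigma(1)}\cdots X_{\sigma(n)}$.

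The main obstacle I anticipate is purely non-commutative bookkeeping: I must fix the order of the factors in the products $\prod_{j\neq i}X_j$ and in $\prod_i(1-e_i)$, and repeatedly use that each $X_i$ is a two-sided ideal, so that a product of ring elements with at least one factor in $X_i$ lies in $X_i$, and so that $\prod_{j\neq i}X_j$ is simultaneously a left and a right ideal. Once the orderings are tracked carefully the expansion arguments are routine, and part (3) falls out of part (1); I do not expect any deeper difficulty.
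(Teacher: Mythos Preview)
Your argument is correct in all four parts; the bookkeeping with ordered products and two-sidedness of the $X_i$ is handled properly, and the derivation of (2) and (3) from the explicit construction in (1) goes through as you describe. The paper itself offers no proof of this lemma, stating only that ``these properties are well known,'' so there is no approach to compare against.
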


\begin{definition}
Let $X$ be an ideal of $R$, and $M$ be an $R$-module. We define the \textit{component} of $X$ in $M$ to be $C(X)=\sum\limits_{k\in\mathbb{N}}\underline{\ell}_M(X^k)$.
\end{definition}

\begin{theorem}\label{thm:infinite}
Let $\mathcal{X}=\{X_i\}_{i\in I}$ be a nonempty collection of pairwise comaximal ideals of $R$, and let $Y$ be a generating set for $M$. Then:
\begin{enumerate}
    \item for each $y\in Y$, there exists a finite subset $J\subseteq I$ such that $\bigcap\limits_{j\in J}X_j\subseteq\underline{r}_R(y)$ if and only if $M=\oplus_{i\in I}\underline{\ell}_M(X_i)$;
    \item for each $y\in Y$, there exists a finite subset $J\subseteq I$ and positive integers $k_j$, $j\in J$, such that $\bigcap\limits_{j\in J}X_j^{k_j}\subseteq\underline{r}_R(y)$ if and only if $M=\oplus_{i\in I}C(X_i)$.
\end{enumerate}
\end{theorem}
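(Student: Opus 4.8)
The plan is to treat the two parts in parallel: part (2) is the general statement, and part (1) is the same argument with every exponent $k_j$ set equal to $1$ and $C(X_i)$ replaced by $\underline{\ell}_M(X_i)$, so I would prove (2) in detail and then remark that (1) falls out by repeating the computation. I would structure the proof of (2) around three observations: directness of the sum is automatic, and then each of the two implications. I would begin with the preliminary remark that $\underline{\ell}_M(X^k)\subseteq\underline{\ell}_M(X^{k+1})$ since $X^{k+1}\subseteq X^k$, so $C(X)=\bigcup_{k\in\mathbb{N}}\underline{\ell}_M(X^k)$ is an increasing union and every element of $C(X)$ is killed by a single power of $X$; I would also note in passing that each $\underline{\ell}_M(X^k)$, hence $C(X)$, is fully invariant in $M$, since $f(m)X^k=f(mX^k)=0$ whenever $mX^k=0$ and $f\in\mathrm{End}_R(M)$.

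Next, for directness it suffices to fix $i_0\in I$ and show $C(X_{i_0})\cap\sum_{i\neq i_0}C(X_i)=0$. If $m$ lies in this intersection, then $m\in\underline{\ell}_M(X_{i_0}^{k})$ for some $k$, and $m=m_1+\dots+m_t$ with each $m_s\in\underline{\ell}_M(X_{i_s}^{k_s})$ and the $i_s\neq i_0$ distinct. Lemma~\ref{lem:props}(4) gives $X_{i_0}^{k}+X_{i_s}^{k_s}=R$ for each $s$, and combining these through the elementary fact that $A+B=R=A+C$ forces $A+BC=R$ yields $X_{i_0}^{k}+\bigcap_{s}X_{i_s}^{k_s}=R$. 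Writing $1=a+b$ with $a\in X_{i_0}^{k}$ and $b\in\bigcap_{s}X_{i_s}^{k_s}$, one gets $ma=0$ and $m_sb=0$ for every $s$ (since $b\in X_{i_s}^{k_s}$), hence $m=ma+mb=0$. The same reasoning with all exponents equal to $1$ makes $\sum_{i\in I}\underline{\ell}_M(X_i)$ direct.

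For the backward implication, assuming the condition on $Y$ I want each $y\in Y$ to lie in $\sum_{i}C(X_i)$; since $Y$ generates $M$ and the sum is direct this gives $M=\oplus_{i\in I}C(X_i)$. Fix $y$ and take the finite $J\subseteq I$ and exponents $k_j$ with $\bigcap_{j\in J}X_j^{k_j}\subseteq\underline{r}_R(y)$; the cases $|J|\leq1$ are immediate, and for $|J|>1$ the family $\{X_j^{k_j}\}_{j\in J}$ is pairwise comaximal by Lemma~\ref{lem:props}(4), so Lemma~\ref{lem:props}(1) lets me write $1=\sum_{j\in J}e_j$ with $e_j\in\bigcap_{j'\in J,\,j'\neq j}X_{j'}^{k_{j'}}$. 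Then $y=\sum_{j\in J}ye_j$, and because the $X_{j'}^{k_{j'}}$ are ideals one has $e_jX_j^{k_j}\subseteq\bigcap_{j'\in J}X_{j'}^{k_{j'}}\subseteq\underline{r}_R(y)$, so $(ye_j)X_j^{k_j}=0$, i.e. $ye_j\in\underline{\ell}_M(X_j^{k_j})\subseteq C(X_j)$, whence $y\in\sum_{j\in J}C(X_j)$. Conversely, if $M=\oplus_{i\in I}C(X_i)$ and $y\in Y$, write $y=\sum_{i}c_i$ with $c_i\in C(X_i)$ and $J=\{i:c_i\neq0\}$ finite, pick $k_j$ with $c_j\in\underline{\ell}_M(X_j^{k_j})$ using the increasing-union remark, and observe $y\bigl(\bigcap_{j\in J}X_j^{k_j}\bigr)\subseteq\sum_{j\in J}c_jX_j^{k_j}=0$, so $\bigcap_{j\in J}X_j^{k_j}\subseteq\underline{r}_R(y)$. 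Running both directions with every $k_j=1$ and with $C(X_i)$ replaced by $\underline{\ell}_M(X_i)$ throughout establishes part (1) (note that part (1) should be proved this way directly, rather than deduced from (2), since one does not control whether $C(X_i)=\underline{\ell}_M(X_i)$).

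The whole argument is essentially careful bookkeeping with pairwise comaximal ideals; the one spot that deserves attention — and what I would flag as the main obstacle — is the passage ``$A+B_s=R$ for every $s$ implies $A+\bigcap_sB_s=R$'', where stability of comaximality under taking powers (Lemma~\ref{lem:props}(4)) must be combined with the identity $A+BC=R$, and which is invoked both to establish directness and to split each $y$; once that step is in place, the remaining verifications are routine.
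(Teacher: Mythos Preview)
Your proposal is correct and takes essentially the same approach as the paper: both use Lemma~\ref{lem:props}(1),(4) to write $1$ as a sum of elements lying in the complementary intersections $\bigcap_{j'\neq j}X_{j'}^{k_{j'}}$ in order to split each generator $y$, and both rely on the comaximality $X_{i_0}^{k}+\bigcap_{s}X_{i_s}^{k_s}=R$ for directness. The only cosmetic difference is that you verify directness element-wise via $1=a+b$, $ma=mb=0$, while the paper records the same computation by observing that the right annihilator of $\underline{\ell}_M(X^k)\cap\sum_i\underline{\ell}_M(X_i^{k_i})$ contains $X^k+\bigcap_iX_i^{k_i}=R$.
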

\begin{proof}
(1) The proof of (1) is similar to that of (2).

(2) ($\Rightarrow$) Let $0\neq y\in Y$. Then $\underline{r}_R(yR)\supseteq\bigcap\limits_{i=1}^nX_i^{k_i}$ for some $X_i\in\mathcal{X}$, $k_i\geq1$, since $\bigcap\limits_{i=1}^nX_i^{k_i}$ is an ideal of $R$. We can write $y$ as $y=yx_{2,1}x_{3,1}\dotsb x_{n,1}+yx_{1,2}x_{3,2}\dotsb x_{n,2}+\dotsb+yx_{1,n}x_{2,n}\dotsb x_{n-1,n}$, where $x_{i,j}\in X_i^{k_i}$ and $x_{2,1}x_{3,1}\dotsb x_{n,1}+\dotsb+x_{1,n}x_{2,n}\dotsb x_{n-1,n}=1$, since $\{X_i^{k_i}\}_{i=1}^n$ is pairwise comaximal. Note that the $i^{th}$ term of the sum is an element of $\underline{\ell}_M(X_i^{k_i})$. Thus $y\in\sum\limits_{X\in\mathcal{X}}C(X)$, so $M=\sum\limits_{i\in I}C(X_i)$.

To show that $\{C(X)\mid X\in\mathcal{X}\}$ is an independent set, take $X\in\mathcal{X}$ and a finite subset $\{X_i\}_{i=1}^n$ of $\mathcal{X}$ not containing $X$. Notice that $\underline{r}_R\bigl(\underline{\ell}_M(X^k)\cap\sum\limits_{i=1}^n\underline{\ell}_M(X_i^{k_i})\bigr)\supseteq\underline{r}_R\bigl(\underline{\ell}_M(X^k)\bigr)+\bigcap\limits_{i=1}^n\underline{r}_R\underline{\ell}_M(X_i^{k_i})\supseteq X^k+\bigcap\limits_{i=1}^nX_i^{k_i}=R$ for any $k,k_i\geq1$. Thus $\underline{\ell}_M(X^k)\cap\sum\limits_{i=1}^n\underline{\ell}_M(X_i^{k_i})=0$ for $k,k_i\geq1$, so $C(X)\cap\sum\limits_{X'\neq X}C(X')=0$. Therefore $C(X)\mid X\in\mathcal{X}\}$ is an independent set.

($\Leftarrow$) Let $y\in Y$. Then $y=y_1+y_2+\cdots+y_n$ for some $y_i\in C(X_i)$ with $X_i\in\mathcal{X}$, $1\leq i\leq n$. For each $i$, there is a minimum power $k_i$ for which $y_i\in\underline{\ell}_M(X_i^{k_i})$. Thus we have $\underline{r}_R(yR)\supseteq\underline{r}_R\bigl(\oplus_{i=1}^n\underline{\ell}_M(X_i^{k_i})\bigr)=\bigcap\limits_{i=1}^n\underline{r}_R\underline{\ell}_M(X_i^{k_i})\supseteq\bigcap\limits_{i=1}^nX_i^{k_i}$.
\end{proof}

Note that in the above result, each $\underline{\ell}_M(X_i)\trianglelefteq M$, and each $C(X_i)\trianglelefteq M$. Hence if $M=\oplus_{i\in I}\underline{\ell}_M(X_i)$ or $M=\oplus_{i\in I}C(X_i)$, then $\rm{End}(M_R)=\prod\rm{End}\bigl(\underline{\ell}_M(X_i)_R\bigr)$ or $\rm{End}(M_R)=\prod\rm{End}\bigl(C(X_i)_R\bigr)$, respectively.

For another immediate example illustrating Theorem~\ref{thm:infinite}, let $\{X_i\}_{i\in\mathbb{N}}$ be a set of pairwise comaximal ideals of $R$ and $M=\oplus_{i\in\mathbb{N}}R/X_i^{i}$. For any $m\in M$, there exists a finite nonempty subset $J\subseteq I$ such that $\bigcap\limits_{j\in J}X_j^j\subseteq\underline{r}_R(mR)$. By Theorem~\ref{thm:infinite}, $M=\oplus_{i\in I}C(X_i)$. Note that $C(X_i)=R/X_i^i$ for each $i\in\mathbb{N}$.

\begin{corollary}\label{thm:main}
Let $\{X_i\}_{i=1}^n$, $n\geq2$, be a set of pairwise comaximal ideals of $R$, and let $A=\bigcap\limits_{i=1}^nX_i$. Then:
\begin{enumerate}
    \item $A\subseteq\underline{r}_R(M)$ if and only if $M=\underline{\ell}_M(X_1)\oplus\dotsb\oplus\underline{\ell}_M(X_n)$;
    \item $\bar{M}=M/MA=\oplus_{i=1}^n\underline{\ell}_{\bar{M}}(X_i)$.
\end{enumerate}
\end{corollary}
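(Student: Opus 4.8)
The plan is to derive Corollary~\ref{thm:main} directly from Theorem~\ref{thm:infinite} by specializing to the finite collection $\mathcal{X}=\{X_i\}_{i=1}^n$ and taking $Y=M$ as a (tautological) generating set for $M$.

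For part (1), the key observation is that the annihilator condition appearing in Theorem~\ref{thm:infinite}(1) becomes uniform when $\mathcal{X}$ is finite. First I would note that $A\subseteq\underline{r}_R(M)$ means $MA=0$, i.e. $A\subseteq\underline{r}_R(y)$ for every $y\in M$; taking $J=\{1,\dots,n\}$ (so $\bigcap_{j\in J}X_j=A$) shows that the left-hand condition of Theorem~\ref{thm:infinite}(1) holds for the generating set $Y=M$. Conversely, if $M=\underline{\ell}_M(X_1)\oplus\dots\oplus\underline{\ell}_M(X_n)$, then for each $y\in M$ we get, exactly as in the ($\Leftarrow$) direction of the theorem's proof, $\underline{r}_R(y)\supseteq\bigcap_{i=1}^n\underline{r}_R\underline{\ell}_M(X_i)\supseteq\bigcap_{i=1}^nX_i=A$, whence $MA=0$, i.e. $A\subseteq\underline{r}_R(M)$. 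So part (1) is just Theorem~\ref{thm:infinite}(1) read off with this choice of $Y$, together with the elementary reformulation of "$A\subseteq\underline{r}_R(M)$."

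For part (2), I would apply part (1) (or Theorem~\ref{thm:infinite}(1)) to the module $\bar M=M/MA$ in place of $M$, using the images $\bar X_i$... but more carefully, to the same ideals $X_i$ of $R$ acting on $\bar M$. The point is that $\bar M A=(M/MA)A=(MA+MA)/MA=0$, so $A\subseteq\underline{r}_R(\bar M)$ automatically. Since $\{X_i\}_{i=1}^n$ is still pairwise comaximal in $R$ and $\bar M$ is a right $R$-module killed by $A=\bigcap_{i=1}^n X_i$, part (1) applied to $\bar M$ yields $\bar M=\underline{\ell}_{\bar M}(X_1)\oplus\dots\oplus\underline{\ell}_{\bar M}(X_n)$, which is exactly the assertion of (2).

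The only real point requiring care—and the step I expect to be the mild obstacle—is the bookkeeping in part (2): one must be sure that the hypothesis of part (1) is met by $\bar M$ with the \emph{same} ideals $X_i$ (not their images), which is immediate since $\underline{\ell}_{\bar M}(X_i)$ makes sense for any ideal $X_i\subseteq R$, and that $A$ annihilates $\bar M$, which is the computation $\bar M A=0$ above. Everything else is a direct quotation of Theorem~\ref{thm:infinite} and its proof, so the corollary follows with no new ideas.
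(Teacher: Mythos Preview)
Your proposal is correct and follows precisely the approach intended by the paper: the corollary is obtained directly from Theorem~\ref{thm:infinite}(1) by taking the finite family $\mathcal{X}=\{X_i\}_{i=1}^n$ with $Y=M$, and part~(2) is simply part~(1) applied to $\bar M=M/MA$, which is killed by $A$. The paper does not spell out a separate proof (and remarks that the Chinese Remainder Theorem gives an alternative route), so your derivation matches the paper's intent exactly.
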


Note that Corollary~\ref{thm:main} can also be proven using the Chinese Remainder Theorem (cf., e.g., \cite[p.131]{Hun-1974}).

Recall that a prime Goldie ring $R$ in which each nonzero ideal of $R$ is invertible is called an \textit{Asano order} (or an \textit{Asano prime ring} \cite[pp.146--150]{MR-2001}). For example, a Dedekind domain is an Asano order. The next result is an application of Theorem~\ref{thm:infinite} to rings with sets of commuting pairwise comaximal ideals. Note that in an Asano order, multiplication of maximal ideals is commutative, and every nonzero ideal is a unique product of maximal ideals.

\begin{corollary}
Suppose that $\{X_i\}_{i\in I}$ is a set of commuting pairwise comaximal ideals of $R$ (i.e., $X_iX_j=X_jX_i$ for all $i,j\in I$). Let $M$ be a nonzero $R$-module and $Y$ a generating set of $M$. If, for every $y\in Y$, there exists a finite subset $J\subseteq I$ and positive integers $k_j$ such that $\prod\limits_{j\in J}X_j^{k_j}\subseteq\underline{r}_R(yR)$, then $M=\oplus_{i\in I}C(X_i)$.
\end{corollary}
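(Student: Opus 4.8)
The plan is to reduce everything to Theorem~\ref{thm:infinite}(2) by showing that, under the commutativity hypothesis, the finite product $\prod_{j\in J}X_j^{k_j}$ occurring in the hypothesis coincides with the intersection $\bigcap_{j\in J}X_j^{k_j}$. Once this identification is in hand, the hypothesis says exactly that for every $y\in Y$ there is a finite $J\subseteq I$ and positive integers $k_j$ with $\bigcap_{j\in J}X_j^{k_j}\subseteq\underline{r}_R(yR)=\underline{r}_R(y)$ (the last equality because $R$ has an identity), which is precisely the left-hand condition of Theorem~\ref{thm:infinite}(2); that theorem then yields $M=\oplus_{i\in I}C(X_i)$.

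First I would record the two-ideal case: if $A,B\trianglelefteq R$ satisfy $A+B=R$ and $AB=BA$, then $A\cap B=AB$. Indeed $AB\subseteq A\cap B$ always, and conversely $A\cap B=(A\cap B)(A+B)=(A\cap B)A+(A\cap B)B\subseteq BA+AB=AB$. I would also note the elementary fact that a product of ideals each comaximal with a fixed ideal $C$ is again comaximal with $C$: from $A+C=R$ and $B+C=R$ one gets $R=(A+C)(B+C)\subseteq AB+C$, and the general case follows by iteration.

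Next I would fix a finite $J=\{1,\dots,n\}$ and positive integers $k_1,\dots,k_n$ and prove $\prod_{j=1}^{n}X_j^{k_j}=\bigcap_{j=1}^{n}X_j^{k_j}$ by induction on $n$, the case $n=1$ being trivial. By Lemma~\ref{lem:props}(4) the ideals $X_1^{k_1},\dots,X_n^{k_n}$ are again pairwise comaximal, and since $X_iX_j=X_jX_i$ they still commute pairwise. Put $P=\prod_{j=1}^{n-1}X_j^{k_j}$; by the inductive hypothesis $P=\bigcap_{j=1}^{n-1}X_j^{k_j}$. Using the comaximality remark above, $P+X_n^{k_n}=R$, and $P$ commutes with $X_n^{k_n}$ because every $X_i^{k_i}$ does. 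The two-ideal case then gives $P\cap X_n^{k_n}=PX_n^{k_n}$, i.e. $\bigcap_{j=1}^{n}X_j^{k_j}=\prod_{j=1}^{n}X_j^{k_j}$, completing the induction. (If some $y\in Y$ is $0$ the condition is vacuous, taken with $J=\varnothing$; otherwise $J$ is nonempty.)

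The only genuine content is the product-equals-intersection lemma for commuting pairwise comaximal ideals, and that is exactly where noncommutativity must be managed — the two-ideal identity $A\cap B=AB$ fails in general without $AB=BA$, so the commutativity assumption is used precisely to push $X_n^{k_n}$ past the product $P$. Everything else is bookkeeping and a direct appeal to Theorem~\ref{thm:infinite}(2).
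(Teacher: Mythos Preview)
Your proposal is correct and follows essentially the same route as the paper, which simply says the result follows from Lemma~\ref{lem:props} and Theorem~\ref{thm:infinite}. You have supplied the one substantive detail the paper leaves implicit---namely that for commuting pairwise comaximal ideals the finite product $\prod_{j\in J}X_j^{k_j}$ coincides with the intersection $\bigcap_{j\in J}X_j^{k_j}$---and your two-ideal lemma plus induction is the standard way to see this.
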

\begin{proof}
The corollary follows from Lemma~\ref{lem:props} and Theorem~\ref{thm:infinite}.
\end{proof}

\begin{corollary}\label{cor:Asano}
Let $R$ be an Asano prime ring and $\{X_i\}_{i\in I}$ be the set of maximal ideals of $R$, and let $Y$ be a generating set of $M$. If $\underline{r}_R(yR)\neq0$ for all $y\in Y$, then $M=\oplus_{i\in I}C(X_i)$.
\end{corollary}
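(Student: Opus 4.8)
The plan is to deduce Corollary~\ref{cor:Asano} from the previous corollary on commuting pairwise comaximal ideals. First I would record the structural facts about an Asano prime ring $R$: its nonzero ideals form a free abelian group under multiplication with the maximal ideals as a basis, so in particular any two maximal ideals commute, and every nonzero ideal is a finite product $\prod_{j\in J}X_j^{k_j}$ of powers of distinct maximal ideals $X_j$, $k_j\geq 1$. Since distinct maximal ideals $X_i,X_j$ satisfy $X_i+X_j=R$ by maximality, the family $\{X_i\}_{i\in I}$ is pairwise comaximal, and being commuting it is a set of commuting pairwise comaximal ideals, so the hypotheses of the preceding corollary are in force.

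Next I would verify the hypothesis ``for every $y\in Y$ there is a finite $J\subseteq I$ and positive integers $k_j$ with $\prod_{j\in J}X_j^{k_j}\subseteq\underline{r}_R(yR)$.'' Fix $y\in Y$. By assumption $\underline{r}_R(yR)\neq 0$, and it is a (two-sided) ideal of $R$ since $yR$ is a submodule. A nonzero ideal of an Asano prime ring contains a nonzero product of maximal ideals — indeed it \emph{is} such a product, $\underline{r}_R(yR)=\prod_{j\in J}X_j^{k_j}$ for some finite $J\subseteq I$ and $k_j\geq 1$ — so the required containment holds trivially (with equality). Here one should note that $\underline{r}_R(yR)=R$ is impossible since $M$ is nonzero and $y$ is a nonzero generator; but even if one allows that degenerate reading, $R$ is the empty product and the conclusion $y\in C(R)$ is still fine, so no case analysis is really needed.

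With both hypotheses checked, the previous corollary applies directly and yields $M=\oplus_{i\in I}C(X_i)$, which is the assertion. I would then optionally remark, in parallel with the earlier $p$-component discussion, that this recovers the classical primary decomposition of a torsion module over a Dedekind domain: when $R$ is a Dedekind domain, $\underline{r}_R(yR)\neq 0$ for all $y$ in a generating set exactly says $M$ is a torsion module, the $X_i$ are the nonzero primes, and $C(X_i)$ is the $X_i$-primary component.

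I do not anticipate a genuine obstacle here, as the corollary is a specialization; the only point requiring a little care is invoking the ideal theory of Asano prime rings (unique factorization of nonzero ideals into maximal ideals, and commutativity of that multiplication) to guarantee that $\underline{r}_R(yR)$, which is a priori just some nonzero ideal, actually has the shape $\prod_{j\in J}X_j^{k_j}$ demanded by the hypothesis of the commuting-ideals corollary. Since the excerpt already records these facts about Asano orders in the paragraph preceding the statement, this step is immediate in context.
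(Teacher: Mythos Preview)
Your proposal is correct and follows exactly the route the paper intends: the corollary is stated immediately after the commuting-ideals corollary, with the preceding paragraph already recording that in an Asano order maximal-ideal multiplication commutes and every nonzero ideal is a product of maximal ideals, so the deduction you outline is precisely what is meant. The paper gives no separate proof for this corollary, and your argument supplies the obvious one.
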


Note that Corollary~\ref{cor:Asano} generalizes the well-known theorem that every torsion abelian group is the direct sum of its $p$-components. That is, in the case where $R=\mathbb{Z}$ and $M$ is an abelian torsion group, then Theorem~\ref{thm:infinite} yields the decomposition of $M$ into its $p$-components.

A natural question to ask is: ``Under what conditions can we guarantee that each annihilator direct summand of the decomposition afforded by Theorem~\ref{thm:infinite} or Corollary~\ref{thm:main} is nonzero?'' For example, take $R=\mathbb{Z}$, and let $M=\mathbb{Z}_2\oplus\mathbb{Z}_3$. Consider $\{2\mathbb{Z},3\mathbb{Z},5\mathbb{Z}\}$. Then the conditions of Corollary~\ref{thm:main}(1) are satisfied, so $M=\underline{\ell}_M(2\mathbb{Z})\oplus\underline{\ell}_M(3\mathbb{Z})\oplus\underline{\ell}_M(5\mathbb{Z})$. But $\underline{\ell}_M(5\mathbb{Z})=0$.

The next theorem gives a set of conditions which ensures the non-triviality of the direct summands.

\begin{theorem}\label{thm:minimal}
Let $\{X_i\}_{i\in I}$ be a family of pairwise comaximal ideals of $R$. Then $M=\oplus_{i\in I}\underline{\ell}_M(X_i)$ and each $\underline{\ell}_M(X_i)\neq0$ if and only if
\begin{enumerate}
    \item for every $m\in M$, there exists a nonempty finite subset $J\subseteq I$ such that $\bigcap\limits_{i\in J}X_i\subseteq\underline{r}_R(mR)$; and
    \item for every $X_j$, $j\in I$, there exists $m\in M$ such that for some nonempty finite subset $J\subseteq I$ with $j\in J$, $\bigcap\limits_{i\in J}X_i\subseteq\underline{r}_R(mR)$ and $\bigcap\limits_{i\in J-\{j\}}X_i\nsubseteq\underline{r}_R(mR)$.
\end{enumerate}
\end{theorem}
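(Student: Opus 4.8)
The plan is to combine Theorem~\ref{thm:infinite}(1) with a careful analysis of when the individual summands $\underline{\ell}_M(X_i)$ vanish. The forward direction should be the easier one: assuming $M=\oplus_{i\in I}\underline{\ell}_M(X_i)$ with every summand nonzero, condition (1) is immediate from Theorem~\ref{thm:infinite}(1) applied to the generating set $Y=M$ (writing any $m\in M$ as a finite sum of elements from the summands, reading off the relevant finite index set $J$, and using that a finite intersection of the $X_i$ is an ideal contained in $\underline{r}_R(mR)$). For condition (2), fix $j\in I$ and pick $0\neq m_j\in\underline{\ell}_M(X_j)$. I would argue that taking a minimal finite $J\ni j$ with $\bigcap_{i\in J}X_i\subseteq\underline{r}_R(m_jR)$ works: minimality of $J$ forces $\bigcap_{i\in J-\{j\}}X_i\nsubseteq\underline{r}_R(m_jR)$, since otherwise $J-\{j\}$ would be a strictly smaller witnessing set (and $J-\{j\}\neq\emptyset$ because $X_j\subseteq\underline{r}_R(m_jR)$ would otherwise give $\bigcap_{i\in J}X_i=X_j$, but then $m_j$ lies in $\underline{\ell}_M(X_j)$ and we need to be careful — actually here I should instead observe directly that $\bigcap_{i\in J-\{j\}}X_i\nsubseteq\underline{r}_R(m_jR)$ because $m_j\notin\sum_{i\neq j}\underline{\ell}_M(X_i)$ by directness of the sum and nonzeroness of $m_j$, combined with the comaximality identities of Lemma~\ref{lem:props}).

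For the converse, I would assume (1) and (2) and proceed in two stages. First, (1) together with Theorem~\ref{thm:infinite}(1), applied with $Y=M$, immediately yields $M=\oplus_{i\in I}\underline{\ell}_M(X_i)$. Second, and this is where (2) enters, I must show each $\underline{\ell}_M(X_j)\neq0$. Given $j\in I$, use (2) to obtain $m\in M$ and a finite $J\ni j$ with $\bigcap_{i\in J}X_i\subseteq\underline{r}_R(mR)$ but $\bigcap_{i\in J-\{j\}}X_i\nsubseteq\underline{r}_R(mR)$. From the first containment and the established direct-sum decomposition, $m$ lies in $\oplus_{i\in J}\underline{\ell}_M(X_i)$; write $m=\sum_{i\in J}m_i$ with $m_i\in\underline{\ell}_M(X_i)$. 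The claim is that $m_j\neq0$. Suppose not; then $m=\sum_{i\in J-\{j\}}m_i$, so $m(\bigcap_{i\in J-\{j\}}X_i)=0$ — here I use Lemma~\ref{lem:props}: for $i_0\in J-\{j\}$, the element $m_{i_0}$ is killed by $X_{i_0}$, hence by $\bigcap_{i\in J-\{j\}}X_i\subseteq X_{i_0}$, so the whole sum is killed. This gives $\bigcap_{i\in J-\{j\}}X_i\subseteq\underline{r}_R(mR)$, contradicting (2). Hence $m_j\neq0$, so $\underline{\ell}_M(X_j)\neq0$.

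The main obstacle, and the step requiring the most care, is making the ``$m_j\neq0$ forces $\bigcap_{i\in J-\{j\}}X_i\not\subseteq\underline{r}_R(mR)$''-type reasoning fully precise in the converse, and dually the minimality argument in the forward direction. The subtlety is that $m_j\in\underline{\ell}_M(X_j)$ alone does not control how $m_j$ interacts with $X_i$ for $i\neq j$; one genuinely needs the pairwise comaximality (via Lemma~\ref{lem:props}(2), $X_j+\bigcap_{i\in J-\{j\}}X_i=R$) to separate the summand indexed by $j$ from the rest. Concretely, in the forward direction for condition (2), given $0\neq m_j\in\underline{\ell}_M(X_j)$, I would take the \emph{smallest} finite $J$ (under inclusion) with $j\in J$ and $\bigcap_{i\in J}X_i\subseteq\underline{r}_R(m_jR)$ — such $J$ exists by (1) after possibly enlarging — and then note that $J=\{j\}$ is impossible unless $X_j\subseteq\underline{r}_R(m_jR)$, in which case (1) still supplies \emph{some} other witnessing set and I can instead run the argument with that; more cleanly, I should just verify that for the minimal $J$, removing $j$ breaks the containment: if $\bigcap_{i\in J-\{j\}}X_i\subseteq\underline{r}_R(m_jR)$ held with $J-\{j\}\neq\emptyset$, minimality is contradicted, and if $J-\{j\}=\emptyset$ the condition $\bigcap_{\emptyset}=R\subseteq\underline{r}_R(m_jR)$ would force $m_j=0$, contradiction. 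This case-splitting on whether $J=\{j\}$ is the one genuinely fiddly point; everything else is a direct application of Theorem~\ref{thm:infinite}(1) and the comaximality lemma.
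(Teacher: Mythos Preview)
Your overall strategy matches the paper's: invoke Theorem~\ref{thm:infinite}(1) for condition~(1) and for the direct-sum decomposition in the converse, then handle the nonvanishing of each $\underline{\ell}_M(X_j)$ separately. Your converse argument is correct and in fact more explicit than the paper's (which simply says: if $\underline{\ell}_M(X_j)=0$, then every $m$ already has a witnessing set $J$ with $j\notin J$, contradicting~(2)). One step you glossed over --- that $\bigcap_{i\in J}X_i\subseteq\underline{r}_R(mR)$ forces $m\in\oplus_{i\in J}\underline{\ell}_M(X_i)$ --- is true and follows from comaximality: any nonzero component $m_k$ with $k\notin J$ would satisfy $m_k X_k=0$ and $m_k\bigcap_{i\in J}X_i=0$, hence $m_k R=0$.

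The forward direction for condition~(2), however, is overcomplicated and your minimality argument has a genuine slip. You choose a minimal $J$ \emph{containing $j$} with $\bigcap_{i\in J}X_i\subseteq\underline{r}_R(m_jR)$, and then claim that if $\bigcap_{i\in J-\{j\}}X_i\subseteq\underline{r}_R(m_jR)$ held, $J-\{j\}$ would contradict minimality. But $J-\{j\}$ does not contain $j$, so it is not a competitor in your minimality. Your fallback direct argument (via $m_j\notin\sum_{i\neq j}\underline{\ell}_M(X_i)$) does work, but the paper's route is far simpler: since $0\neq m_j\in\underline{\ell}_M(X_j)$ already gives $X_j\subseteq\underline{r}_R(m_jR)$, one may take $J=\{j\}$ outright. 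Then $\bigcap_{i\in J-\{j\}}X_i=\bigcap_{\emptyset}=R$, and $R\nsubseteq\underline{r}_R(m_jR)$ because $m_j\neq 0$. No minimality, no case-splitting.
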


\begin{proof}
$\Rightarrow$) Suppose that $M=\oplus_{i\in I}\underline{\ell}_M(X_i)$ and each $\underline{\ell}_M(X_i)\neq0$. By Theorem~\ref{thm:infinite}, for every $m\in M$, there exists a nonempty finite subset $J\subseteq I$ such that $\bigcap\limits_{i\in J}X_i\subseteq\underline{r}_R(mR)$. Let $j\in I$. Since $\underline{\ell}_M(X_j)\neq0$, we can find a nonzero $m\in\underline{\ell}_M(X_j)$. Then $\underline{r}_R(mR)\supseteq\underline{r}_R\underline{\ell}_M(X_j)\supseteq X_j$.

($\Leftarrow$) By Theorem~\ref{thm:infinite}, $M=\oplus_{i\in I}\underline{\ell}_M(X_i)$. If $\underline{\ell}_M(X_j)=0$ for some $j\in I$, then for every $m\in M$ there exists a subset $J\subseteq I$ with $j\notin J$ such that $\underline{r}_R(mR)\supseteq\bigcap\limits_{i\in J}X_i$, which is a contradiction.
\end{proof}

The following examples illustrate some of the results of this section.

\begin{examples}\label{section3examples}
(1) Let $X,Y,Z\trianglelefteq R$ such that $X+Y+Z=R$.
\\Let $T=\begin{bmatrix} R& 0& 0\\0& R& Z\\0& 0& R\end{bmatrix}$, and let
$M=\begin{bmatrix} \frac{R}{X+Z}& \frac{R}{Y+Z}& \frac{R}{X+Y}\\ \frac{R\vphantom{3^{3^3}}}{X+Y\vphantom{3_{3_3}}}& \frac{R}{Y+Z}&
\frac{R}{X+Y}\\ \frac{R}{Y+Z}& \frac{R}{X+Z}& \frac{R}{Y+Z}\end{bmatrix}$, where $\frac{R}{X+Y}$ denotes the factor ring of $R$ by $X+Y$. Then
$$\underline{r}_T(M)=\begin{bmatrix} (X+Y)\cap(X+Z)\cap(Y+Z)& 0& 0\\0& (X+Z)\cap(Y+Z)& Z\\0& 0& (X+Y)\cap(Y+Z)\end{bmatrix}.$$
Let $P_1=\begin{bmatrix} X+Y& 0& 0\\0& X+Z& Z\\0& 0& X+Y\end{bmatrix}$,
$P_2=\begin{bmatrix} X+Z& 0& 0\\0& Y+Z& 0\\0& 0& Y+Z\end{bmatrix}$, and
\\$P_3=\begin{bmatrix} Y+Z& 0& 0\\0& R& Z\\0& 0& R\end{bmatrix}$.
Then $P_i+P_j=T$ for all $i\neq j$, and
$P_1\cap P_2\cap P_3\subseteq\underline{r}_T(M)$; so by Corollary~\ref{thm:main},
\begin{align*}
M & =\underline{\ell}_M(P_1)\oplus\underline{\ell}_M(P_2)\oplus\underline{\ell}_M(P_3) \\
& =\begin{bmatrix} 0& 0& \frac{R}{X+Y}\\ \frac{R\vphantom{3^{3^3}}}{X+Y\vphantom{3_{3_3}}}& 0& \frac{R}{X+Y}\\0& \frac{R}{X+Z}& 0
\end{bmatrix}
\oplus\begin{bmatrix} \frac{R}{X+Z}& \frac{R}{Y+Z}& 0\\0& \frac{R\vphantom{3^{3^3}}}{Y+Z\vphantom{3_{3_3}}}& 0\\0& 0&
\frac{R}{Y+Z}\end{bmatrix}
\oplus\begin{bmatrix} 0& 0& 0\\0& 0& 0\\ \frac{R}{Y+Z}& 0& 0\end{bmatrix}.
\end{align*}

(2) Let $R$ be a Dedekind domain and $\{P_1,P_2\}$ a family of distinct nonzero prime ideals of $R$. Suppose that $M=R/P_1^2\oplus R/P_1P_2$. Then $X_1=P_1^2$ and $X_2=P_2$ are two ideals of $R$ such that
\begin{inparaenum}
    \item $P_1^2P_2=\underline{r}_R(M)=X_1\cap X_2$ and
    \item $X_1+X_2=R$.
\end{inparaenum}
Therefore, $M=\underline{\ell}_M(X_1)\oplus\underline{\ell}_M(X_2)$, where $\underline{\ell}_M(X_1)=R/P_1^2\oplus P_2/P_1P_2$ and $\underline{\ell}_M(X_2)=0\oplus P_1/P_1P_2$.
\end{examples}

\begin{lemma}\label{lem:semiqBaer}
Consider the following conditions on $R$:
\begin{enumerate}
\item $R/P(R)$ is quasi-Baer,
\item Every prime ideal of $R$ contains a unique minimal prime ideal, and
\item Every pair of distinct minimal prime ideals is comaximal.
\end{enumerate}
Then (2)$\Longleftrightarrow$(3). Moreover, if $R$ has only finitely many minimal prime ideals, then (1)$\Longleftrightarrow$(2).
\end{lemma}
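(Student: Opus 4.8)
The plan is to prove the two stated equivalences separately, relying on standard facts about prime and minimal prime ideals together with Lemma~\ref{lem:props}.

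\medskip

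\noindent\textbf{Proof strategy for (2)$\Leftrightarrow$(3).} For the direction (2)$\Rightarrow$(3), suppose two distinct minimal primes $P_1,P_2$ are not comaximal, so $P_1+P_2$ is a proper ideal and hence is contained in some maximal (in particular, prime) ideal $Q$. Then $Q\supseteq P_1$ and $Q\supseteq P_2$, contradicting uniqueness in (2). For (3)$\Rightarrow$(2): every prime ideal contains at least one minimal prime (a routine Zorn's lemma argument applied to the set of primes contained in a given prime); if a prime $Q$ contained two distinct minimal primes $P_1,P_2$, then $R=P_1+P_2\subseteq Q$, forcing $Q=R$, a contradiction. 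So the containment is unique. This part is essentially formal and I expect no difficulty.

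\medskip

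\noindent\textbf{Proof strategy for (1)$\Leftrightarrow$(2) under finitely many minimal primes.} Write the minimal primes as $P_1,\dots,P_n$; then $P(R)=\bigcap_{i=1}^n P_i$. Passing to $\bar{R}=R/P(R)$, this ring is semiprime with the images $\bar P_i$ as its (finitely many) minimal primes, and $\bigcap_i \bar P_i = 0$. For (2)$\Rightarrow$(1): condition (2) says the $\bar P_i$ are pairwise comaximal, so by the Chinese Remainder Theorem (or Corollary~\ref{thm:main} applied to $\bar R$ as a module over itself with annihilator $\bigcap \bar P_i = 0$) we get $\bar R \cong \prod_{i=1}^n \bar R/\bar P_i$, a finite product of prime rings. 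A finite direct product of prime (hence quasi-Baer) rings is quasi-Baer, so $\bar R = R/P(R)$ is quasi-Baer. For (1)$\Rightarrow$(2): if $R/P(R)$ is quasi-Baer and semiprime with finitely many minimal primes, then its central idempotents decompose it; the key structural fact is that in a semiprime quasi-Baer ring the right annihilator of each minimal prime is generated by a central idempotent, and these idempotents are orthogonal with sum $1$, so $\bar R \cong \prod_{i=1}^n \bar R/\bar P_i$. From such a product decomposition into prime rings one reads off that distinct $\bar P_i$ are comaximal, i.e. condition (3), which we have shown equals (2).

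\medskip

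\noindent The main obstacle will be the (1)$\Rightarrow$(2) direction: one needs the fact that in a semiprime quasi-Baer ring with finitely many minimal primes, the annihilators of the minimal primes are central idempotent-generated and yield a full orthogonal decomposition of $1$. This is where the quasi-Baer hypothesis (right annihilator of every ideal generated by an idempotent) is used, combined with semiprimeness to force that idempotent to be central. I would isolate this as the crux, citing the standard structure theory of quasi-Baer rings for it, and the rest of the argument then reduces to bookkeeping with the Chinese Remainder Theorem and Lemma~\ref{lem:props}.
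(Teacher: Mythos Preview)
Your argument for (2)$\Leftrightarrow$(3) is exactly the paper's: a non-comaximal pair sits inside a maximal (hence prime) ideal, violating uniqueness; conversely, a prime containing two distinct minimal primes would contain their sum $R$.

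For (1)$\Leftrightarrow$(2) under finitely many minimal primes, the paper does not argue at all: it simply cites \cite[Proposition~4]{BKP-2011}. Your sketch therefore gives strictly more than the paper does. The direction (2)$\Rightarrow$(1) via the Chinese Remainder Theorem is clean and correct: pairwise comaximal $\bar P_i$ with $\bigcap_i \bar P_i=0$ force $\bar R\cong\prod_i \bar R/\bar P_i$, a finite product of prime (hence quasi-Baer) rings. For (1)$\Rightarrow$(2) your outline is on the right track---in a semiprime quasi-Baer ring the annihilator idempotent of an ideal is (left semicentral, hence) central, and with finitely many minimal primes one can extract a full set of orthogonal central idempotents---but, as you yourself flag, turning this into a complete argument that the resulting pieces are exactly the $\bar R/\bar P_i$ requires the structure theory you allude to. That is precisely the content of the cited result, so in practice your proof and the paper's converge on the same external input; you have just unpacked one direction of it.
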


\begin{proof}
Suppose every prime ideal contains a unique minimal prime ideal. Assume that there exist two minimal prime ideals $P_1, P_2$ of $R$ such that $P_1+P_2\subsetneq R$. Then there exists a maximal ideal $M$ such that $P_1+P_2\subseteq M$. Since $M$ is maximal, $M$ is a prime ideal and $P_1,P_2\subseteq M$, which is a contradiction.

Suppose that every pair of distinct minimal prime ideals is comaximal. Assume $P$ is a prime ideal such that $P_1, P_2$ are distinct minimal prime ideals of $R$ contained in $P$. Then $R=P_1+P_2\subseteq P$, which contradicts $P$ being a prime ideal. Thus $P$ contains a unique minimal prime ideal.

The equivalence of (1) and (2) in the case that $R$ has only finitely many minimal prime ideals is established in \cite[Proposition 4]{BKP-2011}.
\end{proof}

Observe that Lemma~\ref{lem:semiqBaer} allows us to decompose a large class of modules over such rings. Note that semilocal rings $R$ with the Jacobson radical of $R$ equal to the prime radical of $R$, left (or right) perfect rings and piecewise prime rings (\cite{BKP-2003} or \cite{BPR-2013}) are examples of rings $R$ such that $R/P(R)$ is quasi-Baer and $R$ has only finitely many minimal prime ideals.

\begin{theorem}\label{thm:semiqBaer}
Let $\mathcal{P}=\{P_i\}_{i\in I}$ be the set of minimal prime ideals of $R$, and let $K$ be a right $R$-module.
\begin{enumerate}
    \item Suppose that every prime ideal of $R$ contains a unique minimal prime ideal. Then $\underline{r}_R(mR)$ contains a nonempty finite intersection of elements of $\mathcal{P}$ for each $m\in M$ if and only if $M=\oplus_{i\in I}\underline{\ell}_M(P_i)$.
    \item Suppose that $R/P(R)$ is a quasi-Baer ring and $I=\{1,2,\dotsb,n\}$. Then:
        \begin{enumerate}
            \item $\bigcap\limits_{i=1}^nP_i^{m_i}\subseteq\underline{r}_R(M)$ for some $k_i\geq1$ if and only if $M=\underline{\ell}_M(P_1^{k_1})\oplus\dotsb\oplus\underline{\ell}_M(P_n^{k_n})$;
            \item if $M=K/KP(R)$ then $M=\underline{\ell}_M(P_1)\oplus\dotsb\oplus\underline{\ell}_M(P_n)$.
        \end{enumerate}
\end{enumerate}
\end{theorem}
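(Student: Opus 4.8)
The plan is to deduce Theorem~\ref{thm:semiqBaer} from the general machinery already assembled, using Lemma~\ref{lem:semiqBaer} to translate the ring-theoretic hypotheses into the pairwise-comaximality needed to invoke Theorem~\ref{thm:infinite} and Corollary~\ref{thm:main}. The only genuinely new input is that the family $\mathcal{P}$ of minimal primes is pairwise comaximal under each hypothesis, together with the fact that, when $R/P(R)$ is quasi-Baer with finitely many minimal primes, $P(R)$ itself is a nonempty finite intersection of powers of the $P_i$.

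For part (1), first I would observe that ``every prime ideal of $R$ contains a unique minimal prime ideal'' is, by Lemma~\ref{lem:semiqBaer}, equivalent to ``every pair of distinct minimal primes is comaximal,'' so $\mathcal{P}=\{P_i\}_{i\in I}$ is a family of pairwise comaximal ideals of $R$. Then part (1) is immediate from Theorem~\ref{thm:infinite}(1): taking $Y=M$ as a generating set, the condition ``$\underline{r}_R(mR)$ contains a nonempty finite intersection of the $P_i$ for every $m\in M$'' is exactly the left-hand side of Theorem~\ref{thm:infinite}(1) (note $\underline{r}_R(mR)=\underline{r}_R(m)$), and the right-hand side $M=\oplus_{i\in I}\underline{\ell}_M(P_i)$ is precisely the asserted conclusion.

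For part (2), $R/P(R)$ quasi-Baer with $I=\{1,\dots,n\}$ finite forces, via Lemma~\ref{lem:semiqBaer}, that the $P_i$ are pairwise comaximal. For (2)(a), apply Lemma~\ref{lem:props}(4) repeatedly: pairwise comaximality of $\{P_i\}$ gives pairwise comaximality of $\{P_i^{k_i}\}$ for any choice of exponents $k_i\geq1$; then Corollary~\ref{thm:main}(1) applied to the family $\{P_i^{k_i}\}_{i=1}^n$ and $A=\bigcap_{i=1}^n P_i^{k_i}$ yields that $A\subseteq\underline{r}_R(M)$ iff $M=\underline{\ell}_M(P_1^{k_1})\oplus\dots\oplus\underline{\ell}_M(P_n^{k_n})$ (the typographical ``$m_i$'' in the statement should read $k_i$). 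For (2)(b), the key step is that when $R/P(R)$ is quasi-Baer with only the finitely many minimal primes $P_1,\dots,P_n$, one has $P(R)=\bigcap_{i=1}^n P_i$ and moreover $P(R)$ contains a product (hence, by Lemma~\ref{lem:props}(3), a suitable power-intersection) of the $P_i$; in any case $\bigcap_{i=1}^n P_i = P(R)$, and $P(R)$ is nilpotent modulo nothing is needed — we simply use that $\bigcap_{i=1}^n P_i=P(R)\subseteq\underline{r}_R(M)$ whenever $MP(R)=0$. Since $M=K/KP(R)$ satisfies $M\cdot P(R)=0$, i.e. $P(R)\subseteq\underline{r}_R(M)$, part (2)(a) with all $k_i=1$ (using $\bigcap_{i=1}^n P_i = P(R)$) gives $M=\underline{\ell}_M(P_1)\oplus\dots\oplus\underline{\ell}_M(P_n)$.

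The main obstacle is the identity $\bigcap_{i=1}^n P_i = P(R)$ under the quasi-Baer hypothesis on $R/P(R)$: one always has $P(R)\subseteq\bigcap_{i=1}^n P_i$, and the reverse inclusion $\bigcap_{i=1}^n P_i\subseteq P(R)$ holds because $P(R)$ is the intersection of \emph{all} prime ideals and every prime contains some minimal prime $P_i$, so $\bigcap_{i=1}^n P_i$ lies in every prime. This is elementary and does not even need quasi-Baerness; quasi-Baerness enters only through Lemma~\ref{lem:semiqBaer} to secure pairwise comaximality of the $P_i$, which is what licenses Corollary~\ref{thm:main}. Thus no step is truly hard — the work is entirely in correctly lining up the hypotheses with the inputs of Theorem~\ref{thm:infinite} and Corollary~\ref{thm:main} and in handling the two exponent-labeling typos ($m_i$ versus $k_i$) in the statement.
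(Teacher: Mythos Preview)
Your proposal is correct and follows essentially the same route as the paper, which simply states that (1) follows from Lemma~\ref{lem:semiqBaer} and Theorem~\ref{thm:infinite}, and (2) follows from Corollary~\ref{thm:main}. Your only cosmetic deviation is in (2)(b): the paper implicitly invokes Corollary~\ref{thm:main}(2) directly with $A=\bigcap_{i=1}^n P_i=P(R)$, whereas you route through (2)(a) with all $k_i=1$; these are equivalent, and your observation that $P(R)=\bigcap_{i=1}^n P_i$ needs only the finiteness of $I$ (not quasi-Baerness) is accurate.
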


\begin{proof}
The proof of (1) follows from Lemma~\ref{lem:semiqBaer} and Theorem~\ref{thm:infinite}, and (2) follows from Corollary~\ref{thm:main}.
\end{proof}

Note that for any ring $R$ with only finitely many minimal prime ideals, $R/P(R)$ has a right ring of quotients which is quasi-Baer (e.g., its quasi-Baer hull) with only finitely many minimal prime ideals \cite[Theorem 3.13]{BPR-2009}. Moreover, if $R$ is quasi-Baer and $P$ is a prime ideal, then either $P=eR$ for some $e=e^2\in R$ or $P_R\mathop{\leq^{\rm ess}\!}R_R$ (cf. \cite[Proposition 2.2]{BKP-2003}).

The following corollary is an application of Theorem~\ref{thm:semiqBaer}.

\begin{corollary}\label{cor:semiqBaer}
Let $T$ be an $n\times n$ generalized upper triangular matrix ring with $R_\alpha$ the ring in the $\alpha$-th diagonal entry, and $P_\alpha$ be the subset of $T$ with $0$ in the $\alpha$-th diagonal entry. Take $A$ to be the intersection of the $P_\alpha$, and let $K$ be a right $T$-module with $M=K/KA$. Then $M=\underline{\ell}_M(P_1)\oplus\dotsb\oplus\underline{\ell}_M(P_n)$. Moreover, if each $R_\alpha$ is a prime ring, then $T/P(T)$ is quasi Baer and $P_\alpha$ is a minimal prime ideal of $T$.
\end{corollary}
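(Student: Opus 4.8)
The plan is to identify the $P_\alpha$ as a pairwise comaximal family inside $T$ and then invoke Theorem~\ref{thm:semiqBaer}(2)(b) (equivalently Corollary~\ref{thm:main}(2)) directly. First I would pin down notation: write $T$ as the ring of $n\times n$ matrices $(t_{\alpha\beta})$ with $t_{\alpha\beta}$ lying in a fixed $(R_\alpha,R_\beta)$-bimodule $M_{\alpha\beta}$ for $\alpha\le\beta$ (with $M_{\alpha\alpha}=R_\alpha$) and $t_{\alpha\beta}=0$ for $\alpha>\beta$. Then $P_\alpha$ is the set of matrices in $T$ whose $\alpha$-th diagonal entry is $0$; one checks routinely that $P_\alpha$ is a two-sided ideal of $T$, since multiplying a matrix with a zero in position $(\alpha,\alpha)$ on either side by an arbitrary element of $T$ produces, in position $(\alpha,\alpha)$, a product that still involves the vanishing entry (the triangular shape forces the $(\alpha,\alpha)$-entry of any product to be the product of the two $(\alpha,\alpha)$-entries). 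Moreover $T/P_\alpha\cong R_\alpha$ as rings.

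Next I would verify pairwise comaximality: for $\alpha\ne\beta$, the matrix unit-type element with $1$ in position $(\alpha,\alpha)$ and zeros elsewhere on the diagonal lies in $P_\beta$, and its complementary idempotent (with $1$ in position $(\alpha,\alpha)$) together with it sums to the identity; more simply, the diagonal idempotent $e_\alpha\in T$ (identity of $R_\alpha$ in the $(\alpha,\alpha)$ slot, zero elsewhere) satisfies $e_\alpha\in\bigcap_{\beta\ne\alpha}P_\beta$ and $1-e_\alpha\in P_\alpha$, so $P_\alpha+\bigcap_{\beta\ne\alpha}P_\beta=T$, hence in particular $P_\alpha+P_\beta=T$ for all $\alpha\ne\beta$. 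Thus $\{P_\alpha\}_{\alpha=1}^n$ is a finite pairwise comaximal family with $A=\bigcap_{\alpha=1}^nP_\alpha$. Since $A$ visibly annihilates $\bar M=K/KA$ on the right (by construction $\bar M A=0$), Corollary~\ref{thm:main}(2) applied to $\bar M$, or Corollary~\ref{thm:main}(1) with the roles of $M$ and the quotient made explicit, yields $M=\underline{\ell}_M(P_1)\oplus\dotsb\oplus\underline{\ell}_M(P_n)$.

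For the ``moreover'' clause, assume each $R_\alpha$ is prime. Since $T/P_\alpha\cong R_\alpha$ is a prime ring, each $P_\alpha$ is a prime ideal of $T$. I would then argue each $P_\alpha$ is in fact a \emph{minimal} prime: any prime ideal $Q\subseteq P_\alpha$ must, because the idempotent $e_\alpha$ satisfies $e_\alpha T(1-e_\alpha)\subseteq$ the off-diagonal part and $e_\alpha T e_\alpha\cong R_\alpha$, be comparable in a way that forces $Q=P_\alpha$; concretely, $P(T)$ is the set of strictly-upper-triangular matrices (the Jacobson-radical-like nilpotent ideal), $T/P(T)\cong R_1\times\dotsb\times R_n$ is a finite product of prime rings and hence quasi-Baer, and its minimal primes are exactly the kernels of the $n$ projections, which pull back to the $P_\alpha$. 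So $T/P(T)$ is quasi-Baer and the $P_\alpha$ are precisely the minimal prime ideals of $T$.

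The main obstacle I anticipate is the bookkeeping in showing $P(T)$ equals the strictly-upper-triangular ideal and that $T/P(T)\cong\prod R_\alpha$ when the $R_\alpha$ are prime; this requires checking that the strictly-upper-triangular part $N$ is nilpotent (clear, $N^n=0$) and that $T/N$ has no nonzero nil ideals, which follows since $T/N\cong\prod R_\alpha$ and each $R_\alpha$, being prime, has zero prime radical. Everything else is a direct application of the already-established machinery, so the write-up should be short once this identification is in place.
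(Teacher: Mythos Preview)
Your proposal is correct and matches the paper's intended approach: the paper gives no explicit proof for this corollary, treating it as an immediate consequence of Theorem~\ref{thm:semiqBaer}(2)(b) (equivalently Corollary~\ref{thm:main}(2)) once the $P_\alpha$ are seen to be pairwise comaximal ideals with intersection $A$, which is exactly what you verify via the diagonal idempotents. Your handling of the ``moreover'' clause---identifying the strictly upper triangular part as $P(T)$ so that $T/P(T)\cong\prod_\alpha R_\alpha$ is a finite product of prime rings (hence quasi-Baer) with minimal primes pulling back to the $P_\alpha$---is the standard argument the paper leaves implicit and is also correct.
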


Note that if $R$ is a quasi Baer ring of $T$-dimension $n$, then $R$ is ring isomorphic to an $n\times n$ generalized triangular matrix ring $T$ with prime rings on the diagonal and with minimal prime ideals $P_\alpha$, as in Corollary~\ref{cor:semiqBaer} (c.f. \cite[Theorem 4.4]{BHKP-2000} or \cite[p.162]{BPR-2013}). Also each $P_\alpha=e_\alpha T$ for some left semicentral idempotent $e_\alpha\in T$ or $P_\alpha$ is essential in $T$. Thus either $\underline{\ell}_M(P_\alpha)=M(1-e_\alpha)$ or $\underline{\ell}_M(P_\alpha)$ is a submodule of $Z(M)$. In particular, note that any piecewise prime ring satisfies these conditions. Also observe that any right hereditary right noetherian ring is piecewise prime.

\section{Strongly p-Nilary Decompositions}\label{sec:nilary}

The main result of this section, Theorem~\ref{thm:nilarydecomposition}, extends the characterization of finitely generated torsion modules over Dedekind domains to a large class of noncommutative rings. From \cite{BKP-2013}, we use the following generalization of primary ideals from commutative ring theory and related concepts.

Let $I$ be an ideal of $R$. The \textit{pseudo-radical of} $I$, denoted $\sqrt{I}$, is defined as $\sqrt{I}=\sum\{V\trianglelefteq R\mid V^n\subseteq I$ for some $n\geq1\}$. $I$ is a \textit{strongly p-nilary ideal} of $R$ if and only if $\sqrt{I}$ is a prime ideal of $R$. Note that $\{0\}$ is a strongly p-nilary ideal of $R$ if and only if the sum of all nilpotent ideals is a prime ideal of $R$. Also, a set of strongly p-nilary ideals $Q_1,Q_2,\dots,Q_n$ of $R$ such that $I=Q_1\cap Q_2\cap\cdots\cap Q_n$ forms a \textit{minimal strongly p-nilary decomposition} of $I$ if and only if
\begin{inparaenum}
    \item for each $i$, $1\leq i\leq n$, $I\neq\bigcap\limits_{j\neq i}Q_j$, and
    \item for any subset $S\subseteq\{1,\dots,n\}$ with $\lvert S\rvert\geq2$, the ideal $\bigcap\limits_{s\in S}Q_s$ is not strongly p-nilary.
\end{inparaenum}

The following lemma can be found in \cite[Theorem 2.15]{BKP-2013}.

\begin{lemma}\label{lem:ACCnilary}
Suppose that $R$ has ACC on ideals. Then the following conditions are equivalent:
\begin{enumerate}
    \item For each pair of ideals $A,B\trianglelefteq R$, there exists a positive integer $k$ such that $A^k\cap B^k\subseteq AB$.
    \item Each $I\trianglelefteq R$ has a minimal strongly p-nilary decomposition.
\end{enumerate}
\end{lemma}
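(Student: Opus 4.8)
The plan is to prove the two implications separately, after recording one fact used by both. Under ACC, $\sqrt{I}$ is a \emph{finite} sum $V_1+\dots+V_r$ of ideals with $V_t^{n_t}\subseteq I$ (take a maximal finite sum of such $V_t$), and since each $V_t$ is two-sided, in any product $V_{t_1}\dotsb V_{t_N}$ of sufficiently large length $N$ some index repeats at least $n_t$ times while each other factor can be absorbed one at a time using $V_tR=V_t=RV_t$, leaving a power of a single $V_t$ inside $I$; hence $(\sqrt{I})^N\subseteq I$ for all large $N$. Call this $(\star)$. Two immediate consequences: $A\subseteq\sqrt{I}$ if and only if $A^m\subseteq I$ for some $m$, and $\sqrt{A\cap B}=\sqrt{A}\cap\sqrt{B}$.

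For $(2)\Rightarrow(1)$, given ideals $A,B$ I would take a strongly p-nilary decomposition $AB=Q_1\cap\dots\cap Q_n$ (if $n=0$ then $AB=R$ and there is nothing to show) and set $P_i=\sqrt{Q_i}$, a prime ideal. Since $AB\subseteq Q_i\subseteq P_i$ and $P_i$ is prime, either $A\subseteq P_i$ or $B\subseteq P_i$ for each $i$; applying $(\star)$ to $Q_i$ yields $N_i$ with $A^{N_i}\subseteq Q_i$ or $B^{N_i}\subseteq Q_i$, so in either case $A^{N_i}\cap B^{N_i}\subseteq Q_i$. Taking $k=\max_i N_i$ gives $A^k\cap B^k\subseteq\bigcap_i Q_i=AB$, which is (1).

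For $(1)\Rightarrow(2)$, I would first note (by a standard maximality argument using ACC) that every ideal $I$ is a finite intersection of meet-irreducible ideals---here $J$ is \emph{meet-irreducible} if $J=U\cap V$ (with $U,V$ ideals) forces $J=U$ or $J=V$---and, after discarding the term $R$ if it occurs (which forces $I=R$, handled by the empty intersection), may assume all terms are proper. The crucial step is then: using (1), every proper meet-irreducible ideal $J$ is strongly p-nilary. Indeed $P:=\sqrt{J}$ is proper by $(\star)$, and if $AB\subseteq P$ with $A\not\subseteq P$ and $B\not\subseteq P$ one seeks a contradiction: pick $k$ with $A^k\cap B^k\subseteq AB\subseteq\sqrt{J}$ (this is where (1) enters), so $(A^k\cap B^k)^M\subseteq J$ for some $M$; then, imitating the classical ``meet-irreducible $\Rightarrow$ primary'' argument, stabilize by ACC an appropriate ascending chain of annihilator-type ideals built from $A^k\cap B^k$ and $J$, and use it to derive $J=(J+A^s)\cap(J+B^s)$ for $s$ large. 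Since $A,B\not\subseteq P$ we have $A^s,B^s\not\subseteq J$, so both factors strictly contain $J$, contradicting meet-irreducibility; hence $P$ is prime. This shows $I$ is a finite intersection of strongly p-nilary ideals, and to finish I would refine it to a minimal one: whenever the intersection of two or more of the current components is strongly p-nilary (detected via $\sqrt{A\cap B}=\sqrt{A}\cap\sqrt{B}$), replace those components by that single intersection---the number of components strictly decreases---and, when no such replacement is possible, delete any redundant component; the result satisfies both conditions in the definition of a minimal strongly p-nilary decomposition.

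The step I expect to be the main obstacle is ``proper meet-irreducible $\Rightarrow$ strongly p-nilary.'' Condition (1) is precisely what replaces commutativity there---without it the implication is false (for instance, over the $2\times2$ upper triangular matrix ring over a field the zero ideal is meet-irreducible, but its pseudo-radical, the strictly upper triangular ideal, is not prime)---and the delicate point is the noncommutative bookkeeping in the chain-stabilization argument that upgrades $A^k\cap B^k\subseteq AB$ into the identity $J=(J+A^s)\cap(J+B^s)$.
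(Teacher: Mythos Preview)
The paper does not prove this lemma: it is quoted verbatim from \cite[Theorem~2.15]{BKP-2013}, so there is no in-paper argument to compare yours against. What follows is therefore an assessment of your plan on its own merits.

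Your preliminary fact $(\star)$ is correct, and your derivation of $(2)\Rightarrow(1)$ from it is clean and complete: once $AB=\bigcap_i Q_i$ with $P_i=\sqrt{Q_i}$ prime, primeness forces $A\subseteq P_i$ or $B\subseteq P_i$, and $(\star)$ converts this into $A^{N_i}\subseteq Q_i$ or $B^{N_i}\subseteq Q_i$; taking $k=\max_i N_i$ finishes it. The refinement of an arbitrary strongly p\nobreakdash-nilary decomposition to a minimal one is also fine as you describe it.

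The genuine gap is exactly where you flag it: the implication ``proper meet-irreducible $\Rightarrow$ strongly p\nobreakdash-nilary'' under hypothesis~(1). Your sketch says to imitate the classical Noetherian argument (stabilize an annihilator chain and produce $J=(J+A^{s})\cap(J+B^{s})$), but as written this is not a proof, and the obstruction is more serious than ``noncommutative bookkeeping.'' The commutative argument works elementwise in $R/J$ using $ab\in J$; here you only have $(A^{k}\cap B^{k})^{M}\subseteq J$, and you would like to pass to $\bar R=R/J$ and use condition~(1) there. But (1) does \emph{not} obviously descend to quotients: for ideals $U,V\supseteq J$ one has $\overline{U^{k}\cap V^{k}}\subseteq\bar U^{k}\cap\bar V^{k}$, with equality only when $U^{k},V^{k}\supseteq J$, which there is no reason to expect. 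Consequently the identity $(J+A^{s})\cap(J+B^{s})=J$ cannot be read off from (1) by the route you indicate, and I do not see how a chain of the classical type closes the gap. Your $T_{2}(k)$ example is well chosen---there (1) genuinely fails (e.g.\ $P_{1}^{k}\cap P_{2}^{k}=N\not\subseteq 0=P_{1}P_{2}$ for all $k$), so it shows only that (1) is necessary, not how it is to be used.

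In short: everything except the crucial step is correct and natural, but that step needs either a reference to \cite{BKP-2013} or a fully worked argument showing how condition~(1) in $R$ forces the desired splitting $J=(J+A^{s})\cap(J+B^{s})$.
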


\begin{theorem}\label{thm:nilarydecomposition}
Suppose that $R$ has ACC on ideals, every pair of incomparable prime ideals of $R$ is comaximal, and for any ideals $A,B\trianglelefteq R$, $A^k\cap B^k\subseteq AB$ for some positive integer $k$. Let $M$ be a nonzero right $R$-module.
\begin{enumerate}
\item If $\underline{r}_R(M)\neq0$, then there exists a set $\{P_i\}_{i=1}^n$ of prime ideals and positive integers $k_i$, $1\leq i\leq n$, such that $\bigcap\limits_{i=1}^nP_i^{k_i}\subseteq\underline{r}_R(M)$, each $\underline{\ell}_M(P_i^{k_i})\neq0$, and $M=\oplus_{i=1}^n\underline{\ell}_M(P_i^{k_i})$.
\item Suppose that $\{Q_i\}_{i\in I}$ is the set of minimal prime ideals of $R$, with $\lvert I\rvert\geq2$, and $Y$ a set of generators of $M$. If $\underline{r}_R(yR)\neq0$ for each $y\in Y$, then $M=\oplus_{i\in I}C(Q_i)$.
\end{enumerate}
\end{theorem}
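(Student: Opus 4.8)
The plan is to derive both parts from the minimal strongly p-nilary decompositions furnished by Lemma~\ref{lem:ACCnilary} (whose hypotheses are exactly those assumed here), feeding the resulting comaximal families into Corollary~\ref{thm:main}(1) and Theorem~\ref{thm:infinite}(2). The tool I would use repeatedly is: under ACC on ideals, $(\sqrt{Q})^{k}\subseteq Q$ for some $k\geq 1$. Indeed, the set of finite partial sums of ideals $V$ with $V^{n}\subseteq Q$ has a maximal member by ACC, which must then equal $\sqrt{Q}$; so $\sqrt{Q}=V_{1}+\dots+V_{r}$ with each $V_{i}$ nilpotent modulo $Q$, and a finite sum of nilpotent ideals is nilpotent (if $A^{s}=0=B^{t}$ then $(A+B)^{s}\subseteq B$, hence $(A+B)^{st}=0$; now induct), so the image of $\sqrt{Q}$ in $R/Q$ is nilpotent.

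For (1), I would apply Lemma~\ref{lem:ACCnilary} to the proper nonzero ideal $\underline{r}_{R}(M)$ to get a minimal strongly p-nilary decomposition $\underline{r}_{R}(M)=Q_{1}\cap\dots\cap Q_{n}$, put $P_{i}=\sqrt{Q_{i}}$ (prime), and pick $k_{i}\geq 1$ with $P_{i}^{k_{i}}\subseteq Q_{i}$. If $n=1$ the conclusion is immediate, since then $M=\underline{\ell}_{M}(P_{1}^{k_{1}})\neq 0$; so assume $n\geq 2$. The key claim is that the $P_{i}$ are pairwise incomparable: if $P_{i}\subseteq P_{j}$ with $i\neq j$, then $P_{i}^{k_{i}+k_{j}}\subseteq P_{i}^{k_{i}}\cap P_{j}^{k_{j}}\subseteq Q_{i}\cap Q_{j}$, while every ideal $V$ with $V^{n}\subseteq Q_{i}\cap Q_{j}$ lies in $\sqrt{Q_{i}}=P_{i}$; hence $\sqrt{Q_{i}\cap Q_{j}}=P_{i}$ is prime, contradicting minimality condition (2). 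By hypothesis the $P_{i}$ are therefore pairwise comaximal, so $\{P_{i}^{k_{i}}\}$ is pairwise comaximal by Lemma~\ref{lem:props}(4); since $\bigcap_{i}P_{i}^{k_{i}}\subseteq\bigcap_{i}Q_{i}=\underline{r}_{R}(M)$, Corollary~\ref{thm:main}(1) gives $M=\oplus_{i=1}^{n}\underline{\ell}_{M}(P_{i}^{k_{i}})$. For nontriviality, minimality condition (1) gives $\bigcap_{l\neq i}Q_{l}\supsetneq\underline{r}_{R}(M)$, so there are $m'\in M$ and $a\in\bigcap_{l\neq i}Q_{l}$ with $m'a\neq 0$; then $(m'a)Q_{i}\subseteq m'\bigl((\bigcap_{l\neq i}Q_{l})\cap Q_{i}\bigr)=m'\underline{r}_{R}(M)=0$, whence $0\neq m'a\in\underline{\ell}_{M}(Q_{i})\subseteq\underline{\ell}_{M}(P_{i}^{k_{i}})$.

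For (2), distinct minimal primes are incomparable, hence pairwise comaximal by hypothesis, so $\{Q_{i}\}_{i\in I}$ is a family of pairwise comaximal ideals and it suffices to verify the hypothesis of Theorem~\ref{thm:infinite}(2). I would fix $y\in Y$ (the case $y=0$ being vacuous): $\underline{r}_{R}(yR)$ is then a proper nonzero ideal with a minimal strongly p-nilary decomposition $Q_{1}'\cap\dots\cap Q_{m}'$. For each $l$ choose $k_{l}\geq 1$ with $(\sqrt{Q_{l}'})^{k_{l}}\subseteq Q_{l}'$; since $\sqrt{Q_{l}'}$ is prime it contains some minimal prime $Q_{i(l)}$, so $Q_{i(l)}^{k_{l}}\subseteq Q_{l}'$. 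Setting $J=\{i(l):1\leq l\leq m\}$ (a nonempty finite subset of $I$) and $k_{j}=\max\{k_{l}:i(l)=j\}$ yields $\bigcap_{j\in J}Q_{j}^{k_{j}}\subseteq\bigcap_{l=1}^{m}Q_{l}'=\underline{r}_{R}(yR)\subseteq\underline{r}_{R}(y)$. Theorem~\ref{thm:infinite}(2) then gives $M=\oplus_{i\in I}C(Q_{i})$.

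The step I expect to be the main obstacle is the comaximality bookkeeping in (1): extracting pairwise incomparability (hence, via the standing hypothesis, comaximality) of the radicals $P_{i}$ from minimality condition (2) of the strongly p-nilary decomposition, and dovetailing this with the ACC-based fact $(\sqrt{Q_{i}})^{k_{i}}\subseteq Q_{i}$. The analogous bookkeeping in (2) is easier, but one must take maxima of exponents because the minimal primes $Q_{i(l)}$ attached to different $Q_{l}'$ can coincide.
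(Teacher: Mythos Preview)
Your proposal is correct and follows essentially the same route as the paper: obtain a minimal strongly p-nilary decomposition via Lemma~\ref{lem:ACCnilary}, pass to the prime radicals $P_i=\sqrt{Q_i}$ (resp.\ to minimal primes beneath them), and then invoke Corollary~\ref{thm:main} for part~(1) and Theorem~\ref{thm:infinite}(2) for part~(2). The paper's own proof is extremely terse---part~(1) is dismissed as ``similar to (2)''---so your write-up actually supplies several steps the paper leaves implicit: the ACC argument that $(\sqrt{Q})^{k}\subseteq Q$, the verification that the $P_i$ in part~(1) are pairwise incomparable (hence comaximal) using condition~(2) of minimality, and the explicit nontriviality argument from condition~(1). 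Your observation in part~(2) that different $Q_l'$ may share the same underlying minimal prime, so one must take maxima of exponents, is a genuine bookkeeping point the paper glosses over.
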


\begin{proof}
(1) The proof is similar to that of (2) and follows from Corollary~\ref{thm:main}.

(2) By Lemma~\ref{lem:ACCnilary}, there exists a minimal strongly p-nilary decomposition $\underline{r}_R(yR)=\bigcap\limits_{i=1}^nX_i$, where $\{X_i\}_{i=1}^n$ is a set of strongly p-nilary ideals of $R$. We have the existence of a nonzero prime ideal $P_i=\sqrt{X_i}$ and a positive integer $k_i$ such that $P_i^{k_i}\subseteq X_i\subseteq P_i$ for each $i$, since each $X_i$ is finitely generated. Note that $P_i$ contains a minimal prime ideal, say $Q_i$, and $Q_i^{k_i}\subseteq P_i^{k_i}$. Then $\bigcap\limits_{i=1}^nQ_i^{k_i}\subseteq\bigcap\limits_{i=1}^nX_i=\underline{r}_R(yR)$, Theorem~\ref{thm:infinite} yields $M=\oplus_{i\in I}^nC(Q_i)$. Because $\underline{r}_R(yR)=\bigcap\limits_{i=1}^nX_i$ is a minimal strongly p-nilary decomposition, it follows from Theorem~\ref{thm:minimal} that $C(Q_i)\neq0$ for each $i$.
\end{proof}

Note that the class of rings which satisfy the condition that every incomparable pair of prime ideals is comaximal is closed under taking direct products, matrices, and generalized triangular matrices. Also, this condition implies that distinct minimal prime ideals are pairwise comaximal, so that Theorem~\ref{thm:semiqBaer}(1) may be applicable.

\begin{examples}
Any finite direct sum of matrices over the following rings are examples of rings satisfying the hypothesis of Theorem~\ref{thm:nilarydecomposition}:
\begin{enumerate}
     \item any ring $R$ such that $R$ has ACC on ideals, every nonzero prime ideal of $R$ is maximal, and $R$ has the right AR-property for ideals (cf. \cite[pp. 190--193]{GW-2000}). In particular, any Dedekind domain;
    \item right duo rings with ACC on ideals such that every nonzero prime ideal is maximal (cf. \cite[Theorem 2.15 and Proposition 2.16]{BKP-2013}). For example, any generalized ZPI ring \cite[pp.469--477]{Gil-1972} in which minimal primes are pairwise comaximal (e.g., R=$\mathbb{Z}\oplus\mathbb{Z}_{p^n}$);
    \item local rings with nilpotent Jacobson radicals and ACC on ideals (cf. [7, Corollary 3.18]). In particular, any semisimple Artinian ring is a finite direct sum of such rings.
\end{enumerate}
\end{examples}

\section{Torsion Theory Induced by Pairwise Comaximal Ideals}\label{sec:torsion}

In this section, we develop a preradical $\gamma$ and its radical closure $\bar{\gamma}$ based on Theorem~\ref{thm:infinite}. Our main goal is to obtain a decomposition of a given module into a direct sum of a torsion module and a torsion-free module using the torsion theory that is developed. The torsion modules are defined to have the decomposition of Theorem~\ref{thm:infinite} or at least essentially contain such a decomposition.

For this section, we need basic terminology and facts of torsion theory. The definitions and results can be found in \cite[Chapter VI]{Sten-1975} or \cite[Chapters I, II]{BKN-1982}. We denote the category of all right $R$-modules by $\mathcal{M}_R$.

Given a nonempty set $\mathcal{X}$ of pairwise comaximal ideals of a ring, we define a preradical $\gamma_{\mathcal{X}}$ corresponding to $\mathcal{X}$, and list some basic properties.

\begin{definition}
Let $\mathcal{X}=\{X_i\}_{i\in I}$ be a fixed set of pairwise comaximal ideals of $R$. Define $\gamma_{\mathcal{X}}(M)=\bigl\{m\in M\mid \bigcap\limits_{i\in J}X_i^{k_i}\subseteq\underline{r}_R(mR)\text{ for some nonempty finite subset }J\subseteq I\text{ and positive integers }k_i\bigr\}$. We omit the subscript $\mathcal{X}$ when the context is clear.
\end{definition}

Note that $\gamma(M)$ is a submodule of $M$, and if $\mathcal{X}=\{X_i\}_{i=1}^n$ is finite, then $\gamma(M)=\sum\{N\leq M\mid\bigcap\limits_{i=1}^nX_i^{k_i}\subseteq\underline{r}_R(N)\text{ for some }k_i\geq1\}$.

\begin{proposition}\label{prop:radical}
Let $\{X_i\}_{i\in I}$ be a set of pairwise comaximal ideals of $R$. Then:
\begin{enumerate}
    \item $\gamma$ is a left exact preradical.
    \item $\gamma(M)=\oplus_{i\in I}C(X_i)$. In particular, $M$ is pretorsion-free if and only if $\underline{\ell}_M(X_i^{k_i})=0$ for each $k_i\geq1$.
    \item If either of the following conditions hold:
    \begin{enumerate}
        \item $\bigl(\bigcap\limits_{i\in J}X_i^{k_i}\bigr)^k$ is finitely generated for every nonempty finite subset $J\subseteq I$, and $k,k_i\geq1$, or
        \item $I$ is finite and for every $k_i\geq1$, $\bigl(\bigcap\limits_{i\in I}X_i^{k_i}\bigr)^k=\bigl(\bigcap\limits_{i\in I}X_i^{k_i}\bigr)^{k+1}$ for some $k\geq1$,
    \end{enumerate}
    then $\rho(M)=\bigl\{m\in M\,\Bigm|\,\bigl(\bigcap\limits_{i\in J}X_i^{k_i}\bigr)^n\subseteq\underline{r}_R(mR)\text{ for some }k_i,n\geq1$ and \\ nonempty finite subset $J\subseteq I\bigr\}$, is the smallest radical larger than $\gamma$ (i.e., $\rho=\bar{\gamma}$).
\end{enumerate}
\end{proposition}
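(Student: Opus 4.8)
The plan is to dispatch (1) and (2) by direct verification and to obtain (3) by comparing $\rho$ with the transfinite radical tower over $\gamma$. For (1): $\gamma(M)$ is a submodule because two data $\bigcap_{i\in J_1}X_i^{k_i}\subseteq\underline{r}_R(m_1R)$ and $\bigcap_{i\in J_2}X_i^{l_i}\subseteq\underline{r}_R(m_2R)$ can be merged into a single ideal $\bigcap_{i\in J_1\cup J_2}X_i^{p_i}$ (each $p_i$ large) which lies in $\underline{r}_R(m_1R)\cap\underline{r}_R(m_2R)\subseteq\underline{r}_R((m_1+m_2)R)$, and because $mrR\subseteq mR$; it is a preradical since $\underline{r}_R(mR)\subseteq\underline{r}_R(f(m)R)$ for every $R$-homomorphism $f$; and it is left exact since its membership condition does not refer to the ambient module, so $\gamma(N)=N\cap\gamma(M)$ for $N\leq M$. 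For (2): independence of $\{C(X_i)\}_{i\in I}$ inside $M$ holds unconditionally, exactly by the computation in the proof of Theorem~\ref{thm:infinite}(2) (for $X\neq X_i$ one gets $\underline{r}_R(\underline{\ell}_M(X^k)\cap\sum_i\underline{\ell}_M(X_i^{k_i}))\supseteq X^k+\bigcap_iX_i^{k_i}=R$, so the intersection is $0$); the inclusion $\oplus_iC(X_i)\subseteq\gamma(M)$ is clear since each $\underline{\ell}_M(X_i^k)\subseteq\gamma(M)$; and for $m\in\gamma(M)$ one applies Theorem~\ref{thm:infinite}(2) to the cyclic module $mR$ with generator $m$, obtaining $mR=\oplus_iC_{mR}(X_i)\subseteq\oplus_iC(X_i)$. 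The pretorsion-free criterion is then immediate from $\gamma(M)=\oplus_i\sum_k\underline{\ell}_M(X_i^k)$.

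For (3), the same reasoning as in (1) shows that $\rho$ is a left exact preradical (the extra exponent in the defining condition is absorbed by taking maxima), and $\gamma\leq\rho$ is trivial (take $n=1$). The substance is to prove $\rho=\bar\gamma$. Let $\gamma_0=0$, $\gamma_{\alpha+1}(M)/\gamma_\alpha(M)=\gamma(M/\gamma_\alpha(M))$, and $\gamma_\lambda=\bigcup_{\alpha<\lambda}\gamma_\alpha$ at limits, so that $\bar\gamma=\bigcup_\alpha\gamma_\alpha$ is the smallest radical above $\gamma$. The inclusion $\rho\subseteq\bar\gamma$ needs no finiteness hypothesis: if $A=\bigcap_{i\in J}X_i^{k_i}$ and $mA^n=0$, I would prove by induction on $j$ that $mA^{n-j}\subseteq\gamma_j(M)$ for $0\leq j\leq n$ — the inductive step observing that every $y\in mA^{n-j-1}$ satisfies $yA\subseteq mA^{n-j}\subseteq\gamma_j(M)$, hence $y\in\gamma_{j+1}(M)$ by definition of the tower — so that $m\in\gamma_n(M)\subseteq\bar\gamma(M)$.

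The reverse inclusion $\bar\gamma\subseteq\rho$ is where the hypotheses of (3) enter. I would show $\gamma_\alpha(M)\subseteq\rho(M)$ by transfinite induction; limit stages are automatic, and the successor stage reduces to the claim: \emph{if $mA\subseteq\rho(M)$ for an ideal $A=\bigcap_{i\in J}X_i^{k_i}$, then $m\in\rho(M)$} (granted this, from a witness $mAB^N=0$ with $B=\bigcap_{i\in J'}X_i^{m_i}$ one combines $A$ and $B$ into $C=\bigcap_{i\in J\cup J'}X_i^{q_i}$ with $q_i$ large, so $C\subseteq A\cap B$ and $mC^{N+1}\subseteq mAB^N=0$). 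Proving this reduction is the main obstacle, because $mA\subseteq\rho(M)$ only says that each of the possibly infinitely many elements of $mA$ is annihilated by some power of some finite intersection of the $X_i^{k_i}$, and one must make this uniform. Under hypothesis (3)(a), $A$ is finitely generated, so $mA$ is a finitely generated submodule of $\rho(M)$; its finitely many generators are killed by ideals $(\bigcap X_i^{l_i})^{n_s}$, and taking unions of the index sets and maxima of the exponents and of the $n_s$ yields a single $B$ with $mAB^N=0$. Under hypothesis (3)(b), $I$ is finite so we may take $J=I$, and the stabilization $A^k=A^{k+1}$ lets us replace $A$ by the idempotent ideal $E=A^{\infty}$; since $mE\subseteq mA\subseteq\rho(M)$ and $E=E^2$, the single-ideal mechanism ($mE=mEE$) should again produce a uniform annihilator and place $m$ in $\rho(M)$. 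Once the reduction is in hand, $\rho=\bar\gamma$, so in particular $\rho$ is a radical and is the smallest one above $\gamma$. I expect the argument under (3)(b) to be the most delicate point, since there eventual idempotence must substitute for the compactness that finite generation supplies in (3)(a).
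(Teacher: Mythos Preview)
Your treatment of (1) and (2) is correct and matches the paper's; for (2) you apply Theorem~\ref{thm:infinite} to each cyclic $mR$ while the paper applies it once to $\gamma(M)$ itself, but the content is identical.

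For (3) the paper takes a more direct route than your transfinite tower. Rather than building $\bar\gamma=\bigcup_\alpha\gamma_\alpha$ and sandwiching $\rho$ between it, the paper shows (i) $\rho$ is itself a radical, i.e.\ $\rho(M/\rho(M))=0$ (this is where hypothesis (3)(a) or (3)(b) enters, via exactly your ``finite generation yields a uniform annihilator'' step), and then (ii) $\rho\leq\tau$ for every radical $\tau\geq\gamma$, by the minimal-counterexample form of your descent: if $nA^m\subseteq\tau(M)$ with $m\geq1$ least, then $(nA^{m-1}+\tau(M))/\tau(M)\subseteq\gamma(M/\tau(M))\subseteq\tau(M/\tau(M))=0$, contradicting minimality of $m$. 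The underlying computations coincide with yours; only the packaging differs, and your tower argument is equally valid.

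Your instinct on (3)(b) is right, but the sketch as written has a gap: from $mE\subseteq\rho(M)$ alone, idempotence of $E$ does not produce a uniform annihilator for the possibly infinitely generated $mE$. The fix is a preliminary reduction to a \emph{single} ideal. Since $I$ is finite, set $A_1=\bigcap_{i\in I}X_i$; from $A_1^{\max_i k_i}\subseteq\bigcap_i X_i^{k_i}\subseteq A_1$ one gets $\rho(M)=\{m:mA_1^N=0\text{ for some }N\}$. Applying (3)(b) with all $k_i=1$ gives $p$ with $E_1:=A_1^p$ idempotent, so in fact $\rho(M)=\underline{\ell}_M(E_1)$. Now your ``$mE=mEE$'' mechanism works cleanly: if $\bar m\in\rho(M/\rho(M))$ then $mE_1\subseteq\underline{\ell}_M(E_1)$, hence $mE_1^2=0$, hence $mE_1=0$, so $m\in\rho(M)$. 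The paper likewise gives no details here, simply declaring the argument ``similar'' to (3)(a).
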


\begin{proof}
(1) To show that $\gamma$ is a preradical let $f:M\longrightarrow N$ be an $R$-module homomorphism, and $m\in\gamma(M)$. Then $\underline{r}_R(mR)\supseteq\bigcap\limits_{i\in J}X_i^{k_i}$ for some nonempty finite set $J\subseteq I$ and $k_i\geq1$, so $\bigl(f(m)R\bigr)\bigl(\bigcap\limits_{i\in J}X_i^{k_i}\bigr)=f\bigl(m(\bigcap\limits_{i\in J}X_i^{k_i})\bigr)=f(0)=0$. Thus $f(m)\in\gamma(N)$. Therefore $\gamma$ is a preradical. The proof that $\gamma$ is left exact is straightforward.

(2) Since $\underline{\ell}_{\gamma(M)}(X_i)\subseteq\underline{\ell}_M(X_i)$ for all $i$, by Theorem~\ref{thm:infinite} we have $\gamma(M)\subseteq\oplus_{i\in I}C(X_i)$. Now let $m\in\oplus_{i\in I}C(X_i)$. Then there are a finite set $J\subseteq I$ and positive integers $k_j\geq1$ with $mR\subseteq\oplus_{j\in J}\underline{\ell}_M(X_j^{k_j})$. Hence $\bigcap\limits_{j\in J}X_j^{k_j}\subseteq\underline{r}_R\bigl(\oplus_{j\in J}\underline{\ell}_M(X_j^{k_j})\bigr)\subseteq\underline{r}_R(mR)$. So $m\in\gamma(M)$. Therefore $\gamma(M)=\oplus_{i\in I}C(X_i)$.

(3a) Observe that $\rho$ is a left exact preradical. We show that $\gamma(M/\rho(M))=0$. Suppose that $\bigl(kR+\rho(M)\bigr)/\rho(M)\leq\gamma\bigl(M/\rho(M)\bigr)$ for some $k\in M$. Then $\bigcap\limits_{i\in J}X_i^{k_i}\subseteq\underline{r}_R\bigl(k+\rho(M)\bigr)$ for some nonempty finite subset $J\subseteq I$, $k_i\geq1$. Since $\bigcap\limits_{i\in J}X_i^{k_i}$ is finitely generated, so is $k\bigcap\limits_{i\in J}X_i^{k_i}$. Note that $k\bigcap\limits_{i\in J}X_i^{k_i}\subseteq\rho(M)$ and $k\bigcap\limits_{i\in J}X_i^{k_i}$ finitely generated imply that there exist a nonempty finite subset $J'\subseteq I$ and $k'_j,m\geq1$ such that $k(\bigcap\limits_{i\in J}X_i^{k_i})(\bigcap\limits_{j\in J'}X_j^{k'_j})^m=0$. Then $k(\bigcap\limits_{i\in J\cup J'}X_i^{k_i})^m=0$, so $k\in\rho(M)$. Thus $kR\subseteq\rho(M)$. Therefore $\gamma\bigl(M/\rho(M)\bigr)=0$, so $\gamma(M)\subseteq\rho(M)$.

The method of proof that $\rho$ is a radical (i.e., that $\rho\bigl(M/\rho(M)\bigr)=0$) is similar to the argument above.

Suppose that $\tau$ is a radical containing $\gamma$. Consider $\rho\bigl(M/\tau(M)\bigr)$. Suppose that $nR/\tau(M)\leq M/\tau(M)$ such that there exist a nonempty finite subset $J\subseteq I$ and $k_i,m\geq1$ for which $nR(\bigcap\limits_{i\in J}X_i^{k_i})^m\subseteq \tau(M)$ and $nR(\bigcap\limits_{i\in J}X_i^{k_i})^{m-1}\nsubseteq\tau(M)$. Then $0\neq nR(\bigcap\limits_{i\in J}X_i^{k_i})^{m-1}/\tau(M)\subseteq\gamma\bigl(M/\tau(M)\bigr)=0$, which is a contradiction. Thus $\rho(M/\tau(M))=0$, which implies that $\rho(M)\subseteq \tau(M)$. Therefore $\rho$ is the smallest radical containing $\gamma$, so $\rho=\bar{\gamma}$.

(3b) The method of proof is similar to that of (3a).
\end{proof}

For example, if $R$ is a ring such that every maximal right ideal contains a maximal ideal (for example, if $R$ is a right quasi-duo ring \cite{Yu-1995}), and if $\mathcal{X}$ is the set of maximal ideals of $R$, then $\rm{Soc}(M)\subseteq\gamma(M)$. Thus, in this case, semisimple modules are $\gamma$-torsion modules.

Next we find conditions on $M$ and $R$ so that $M$ splits or essentially splits in $\gamma$ or $\bar{\gamma}$. Finding such conditions is of central importance in torsion theory.

We begin with the following technical lemma.

\begin{lemma}\label{lem:equalannihilators}
Let $M$ be an $R$-module such that $Z(M)=0$ and let $S$ and $K$ be submodules of $M$ such that $S\mathop{\leq^{\rm ess}\!}K$. If $\underline{\ell}_R\underline{r}_R(S)\subseteq\underline{r}_R\underline{r}_R(S)$, then $\underline{r}_R(S)=\underline{r}_R(K)$.
\end{lemma}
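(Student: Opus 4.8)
The plan is to show the two inclusions $\underline{r}_R(K)\subseteq\underline{r}_R(S)$ and $\underline{r}_R(S)\subseteq\underline{r}_R(K)$. The first is immediate: since $S\subseteq K$, any $r\in R$ annihilating $K$ on the right annihilates $S$, so $\underline{r}_R(K)\subseteq\underline{r}_R(S)$ with no hypotheses needed.

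For the reverse inclusion, let $r\in\underline{r}_R(S)$; I want to show $Kr=0$. The natural move is to study the submodule $Kr\leq M$. Because $Sr=0$, for any $k\in K$ we have $(kr)s = k(rs)$, but that is not obviously zero, so instead I would exploit essentiality differently. Consider the set $N=\{k\in K\mid kr\in S\}$; actually the cleaner approach is: since $S\mathop{\leq^{\rm ess}\!}K$, to prove $Kr=0$ it suffices (given $Z(M)=0$) to show that $\underline{r}_R(Kr)$, viewed appropriately, forces $Kr=0$. More precisely, I would argue that $(Kr)\cap S$ is ``large'' inside $Kr$: if $k\in K$ with $kr\neq 0$, essentiality gives $kr R\cap S\neq 0$, i.e. there is $t\in R$ with $0\neq krt\in S$, hence $krt\cdot r = (krt)r$... this still does not close up directly. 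The right framework is to note $r\in\underline{r}_R(S)$ means $r\cdot$ (acting on the right, but $R$ acts on the right of $M$, so I mean $S\subseteq\underline{\ell}_M(rR)$? No — $\underline{r}_R(S)=\{r: Sr=0\}$, so $r\in\underline{r}_R(S)$ says exactly $Sr=0$). Then $r\in\underline{r}_R\underline{r}_R\underline{r}_R(S)$ trivially, and I want to land $r$ in $\underline{r}_R(K)$. Here is where the hypothesis $\underline{\ell}_R\underline{r}_R(S)\subseteq\underline{r}_R\underline{r}_R(S)$ enters.

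The key computation I expect to carry the proof: take $k\in K$ and set $m=kr$. For any $x\in\underline{r}_R(S)$ we have, for any $s\in S$, $sx=0$, so $\underline{r}_R(S)$ is a left ideal annihilated on the left by $S$; in fact $\underline{r}_R(S)$ is a two-sided ideal here since $S$ is a submodule (so $\underline{r}_R(S)=\underline{r}_R(SR)$ is an ideal). Now $m=kr$ with $r\in\underline{r}_R(S)$. I claim $\underline{r}_R(m)$ is essential in $R_R$, whence $m\in Z(M)=0$. To see $\underline{r}_R(m)\mathop{\leq^{\rm ess}\!}R$: given any nonzero right ideal $E\leq R$, I must find $0\neq e\in E$ with $me=0$, i.e. $kre=0$. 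Since $S\mathop{\leq^{\rm ess}\!}K$, either $kr=0$ (done) or $krR\cap S\neq 0$; pick $t\in R$ with $0\neq krt\in S$. Then for $e$ ranging over $E$, use that $\underline{r}_R(S)$ is an ideal containing $r$: $krte \in Se$... I instead use $krt\in S$ and $r\in\underline{r}_R(S)$ to get $krt\cdot r=0$ — so $t r\in\underline{r}_R(kr)$ whenever $krt\in S$. Combining this with the essentiality of $S$ in $K$ and chasing annihilators via the hypothesis $\underline{\ell}_R\underline{r}_R(S)\subseteq\underline{r}_R\underline{r}_R(S)$ should show $\underline{r}_R(kr)$ meets every nonzero right ideal of $R$, giving $kr\in Z(M)=0$ as desired, hence $Kr=0$ and $r\in\underline{r}_R(K)$.

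The main obstacle will be the bookkeeping in the last paragraph: getting from ``$S$ essential in $K$'' to ``$\underline{r}_R(kr)$ essential in $R$'' requires using the annihilator-comparison hypothesis $\underline{\ell}_R\underline{r}_R(S)\subseteq\underline{r}_R\underline{r}_R(S)$ at exactly the point where one wants to pass between the left annihilator of an element of $S$ and a right annihilator, so that $krt\in S$ (with $t\in R$) can be leveraged to produce a nonzero element of any prescribed right ideal that also kills $kr$. I expect the hypothesis is precisely what makes $\underline{r}_R(S)\cdot\underline{r}_R(S)$-type products behave, converting the essentiality of $S$ in $K$ into an essentiality statement about right annihilators; once that is set up, $Z(M)=0$ finishes it cleanly.
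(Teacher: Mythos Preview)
Your strategy matches the paper's exactly: handle the easy inclusion $\underline{r}_R(K)\subseteq\underline{r}_R(S)$, then for $a\in\underline{r}_R(S)$ and $k\in K$ prove $ka\in Z(M)=0$ by showing that $L:=\{x\in R\mid kax=0\}=\underline{r}_R(ka)$ is essential in $R_R$. The gap is that you never actually establish this essentiality. You produce one element of $L$ (namely $tr$, from $0\neq krt\in S$ and $(krt)r=0$), but what is needed is $tR\cap L\neq 0$ for an \emph{arbitrary} $t$ with $kat\neq 0$, and you stop at ``should show'' without exhibiting how the hypothesis delivers this.

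Here is the missing step, which is also the precise point where the hypothesis enters. Fix $t\in R$ with $kat\neq 0$. Since $S\mathop{\leq^{\rm ess}\!}K$, choose $v\in R$ with $0\neq katv\in S$. Now consider $tv\,\underline{r}_R(S)$. If $tv\,\underline{r}_R(S)=0$, then $tv\in\underline{\ell}_R\underline{r}_R(S)\subseteq\underline{r}_R\underline{r}_R(S)$, hence $\underline{r}_R(S)\,tv=0$; in particular $a\,tv=0$ (as $a\in\underline{r}_R(S)$), so $katv=0$, a contradiction. Therefore $tv\,\underline{r}_R(S)\neq 0$: pick $b\in\underline{r}_R(S)$ with $tvb\neq 0$. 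Then $katvb=(katv)b\in S\,b=0$, so $0\neq tvb\in tR\cap L$. Thus $L\mathop{\leq^{\rm ess}\!}R_R$, $ka\in Z(M)=0$, and $a\in\underline{r}_R(K)$. Your instinct that the hypothesis governs ``$\underline{r}_R(S)\cdot\underline{r}_R(S)$-type products'' is on target, but note that it is used contrapositively: it rules out the case $tv\,\underline{r}_R(S)=0$, and that is exactly what lets you extend $tv$ to a nonzero element $tvb$ of $tR$ lying in $L$.
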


\begin{proof}
Since annihilation is order-reversing, $\underline{r}_R(K)\subseteq\underline{r}_R(S)$. Let $a\in\underline{r}_R(S)$, $k\in K$, and $L=\{x\in R\mid kax=0\}$. We show that $L_R\mathop{\leq^{\rm ess}\!}R_R$. Suppose that $t\in R-L$. Then $kat\neq0$. Since $S\mathop{\leq^{\rm ess}\!}K$, there exists $v\in R$ such that $0\neq katv\in S$. If $tv\underline{r}_R(S)=0$, then $tv\in\underline{\ell}_R\underline{r}_R(S)$. Hence $\underline{r}_R(S)tv=0$, so $katv=0$, a contradiction. So $tv\underline{r}_R(S)\neq0$. Then there exists $b\in\underline{r}_R(S)$ such that $tvb\neq0$. Then $katvb=0$. Thus $0\neq tvb\in L$, so $L\mathop{\leq^{\rm ess}\!}R$. Hence $kaL=0$ implies that $ka\in Z(M)=0$. Therefore $\underline{r}_R(S)=\underline{r}_R(K)$.
\end{proof}

From \cite{BMR-2002}, an \textit{FI-extending module} is a $R$-module $M$ such that every fully invariant submodule is essential in a direct summand of $M$. Observe that for an FI-extending module $\gamma(M)\mathop{\leq^{\rm ess}\!}\bar{\gamma}(M)\mathop{\leq^{\rm ess}\!}D$ where $D$ is a direct summand of $M$. Note that FI-extending modules are quite numerous since every finitely generated projective module over a semiprime ring has an FI-extending hull which, in general, is properly contained in its injective hull \cite[Theorem 6]{BPR-2009B}.

\begin{proposition}\label{prop:splitting}
Let $M$ be an FI-extending $R$-module such that $Z(M)=0$. If $R$ is commutative or semiprime, then $M=\gamma(M)\oplus F$ for some submodule $F\leq M$ such that $\gamma(F)=\bar{\gamma}(F)=0$.
\end{proposition}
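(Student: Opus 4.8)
The plan is to use the FI-extending hypothesis to produce a direct summand $D$ of $M$ with $\gamma(M)\mathop{\leq^{\rm ess}\!}D$, to observe that the complementary summand is automatically torsion-free under $\gamma$ and $\bar\gamma$, and then to use nonsingularity together with Lemma~\ref{lem:equalannihilators} to force $D=\gamma(M)$.

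First, since $\gamma$ is a left exact preradical (Proposition~\ref{prop:radical}(1)), $\gamma(M)$ is a fully invariant submodule of $M$, so the FI-extending hypothesis yields a direct summand $D\leq M$ with $\gamma(M)\mathop{\leq^{\rm ess}\!}D$; write $M=D\oplus F$. I would then dispose of the two ``free'' parts of the conclusion. By left exactness of $\gamma$, $\gamma(F)=F\cap\gamma(M)\subseteq F\cap D=0$; and, as recorded just before the statement, $\gamma(F)\mathop{\leq^{\rm ess}\!}\bar\gamma(F)$, so $\gamma(F)=0$ forces $\bar\gamma(F)=0$. Hence everything reduces to proving $D=\gamma(M)$, for then $M=\gamma(M)\oplus F$ is the required decomposition.

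To obtain $D\subseteq\gamma(M)$, I would apply Lemma~\ref{lem:equalannihilators} with $S=\gamma(M)$ and $K=D$. Since $Z(M)=0$ and $\gamma(M)\mathop{\leq^{\rm ess}\!}D$, the only thing to verify is the annihilator hypothesis $\underline{\ell}_R\underline{r}_R(\gamma(M))\subseteq\underline{r}_R\underline{r}_R(\gamma(M))$. Put $T=\underline{r}_R(\gamma(M))$; because $\gamma(M)$ is a right $R$-submodule of $M$ we have $\gamma(M)s\subseteq\gamma(M)$ for every $s\in R$, hence $\gamma(M)(sr)=(\gamma(M)s)r=0$ whenever $\gamma(M)r=0$, so $T$ is a two-sided ideal of $R$. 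If $R$ is commutative then $\underline{\ell}_R(T)=\underline{r}_R(T)$ trivially. If $R$ is semiprime, then $\underline{\ell}_R(T)$ is a left ideal with $\underline{\ell}_R(T)T=0$, so (as $T$ is two-sided) $T\underline{\ell}_R(T)$ is a left ideal with $\bigl(T\underline{\ell}_R(T)\bigr)^2=T\bigl(\underline{\ell}_R(T)T\bigr)\underline{\ell}_R(T)=0$; since a semiprime ring has no nonzero nilpotent one-sided ideals, $T\underline{\ell}_R(T)=0$, i.e.\ $\underline{\ell}_R(T)\subseteq\underline{r}_R(T)$. In either case Lemma~\ref{lem:equalannihilators} applies and yields $\underline{r}_R(\gamma(M))=\underline{r}_R(D)$.

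The final step — and the step I expect to be the main obstacle — is to convert $\underline{r}_R(\gamma(M))=\underline{r}_R(D)$ into the inclusion $D\subseteq\gamma(M)$. For any $d\in D$ we have $\underline{r}_R(dR)\supseteq\underline{r}_R(D)=\underline{r}_R(\gamma(M))$, so it suffices to know that $\underline{r}_R(\gamma(M))$ contains some finite power-intersection $\bigcap_{j\in J}X_j^{k_j}$ with $J\subseteq I$ finite and $k_j\geq1$: granting that, $\bigcap_{j\in J}X_j^{k_j}\subseteq\underline{r}_R(dR)$ puts $d$ in $\gamma(M)$ by definition, whence $D=\gamma(M)$ and $M=\gamma(M)\oplus F$. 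This bounding property is immediate when $I$ is finite and each component $C(X_i)$ is killed by a single power of $X_i$ — in particular when $M$ is finitely generated, or more generally when $\gamma(M)$ is bounded — and in that case the argument is complete. In the general case the implication $\underline{r}_R(\gamma(M))=\underline{r}_R(D)\Rightarrow D=\gamma(M)$ is the delicate point: one has to run the annihilator argument cyclically, replacing $D$ by $dR$ and $\gamma(M)$ by $dR\cap\gamma(M)$ for each $d\in D$, and control the $\bigoplus_{i\in I}C(X_i)$-supports of $dR\cap\gamma(M)$; this is where I would expect the real work of the proof to lie.
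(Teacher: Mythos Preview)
Your approach is line-for-line the paper's approach: use FI-extending to get $\gamma(M)\mathop{\leq^{\rm ess}\!}N$ with $M=N\oplus F$, verify the annihilator hypothesis of Lemma~\ref{lem:equalannihilators} (commutative or semiprime gives $\underline{\ell}_R(T)\subseteq\underline{r}_R(T)$ for the two-sided ideal $T=\underline{r}_R(\gamma(M))$), conclude $\underline{r}_R(\gamma(M))=\underline{r}_R(N)$, deduce $N=\gamma(M)$, and read off $\gamma(F)=\bar\gamma(F)=0$ from left exactness and the essential inclusion $\gamma\subseteq\bar\gamma$.

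The place where you hesitate---passing from $\underline{r}_R(\gamma(M))=\underline{r}_R(N)$ to $N=\gamma(M)$---is dispatched in the paper by the single phrase ``By the definition of $\gamma(M)$, $N=\gamma(M)$,'' with no further argument. So the ``real work'' you anticipate is not supplied in the paper either: the paper treats the implication as immediate. Your worry is legitimate in the sense that, a priori, knowing $\underline{r}_R(nR)\supseteq\underline{r}_R(\gamma(M))$ for $n\in N$ does not obviously produce a \emph{finite} set $J\subseteq I$ and exponents $k_j$ with $\bigcap_{j\in J}X_j^{k_j}\subseteq\underline{r}_R(nR)$, since $\underline{r}_R(\gamma(M))$ is an infinite intersection of such ideals rather than one of them. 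The elementwise refinement you sketch (replace $N$ by $nR$ and $\gamma(M)$ by $nR\cap\gamma(M)$, which is still essential in $nR$) runs into the same obstacle. In short, your reconstruction of the paper's argument is accurate, and the step you flag as delicate is precisely the step the paper asserts without proof; whether that assertion is routine or a genuine gap is a question the paper does not resolve.
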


\begin{proof}
Since $M$ is FI-extending, $\gamma(M)$ is essential in a direct summand, say $N$. Recall that in a semiprime ring $\underline{\ell}_R(I)=\underline{r}_R(I)$ for all $I\trianglelefteq R$. Hence $\underline{\ell}_R\underline{r}_R\bigl(\gamma(M)\bigr)=\underline{r}_R\underline{r}_R\bigl(\gamma(M)\bigr)$. Similarly, if $R$ is commutative, $\underline{\ell}_R\underline{r}_R\bigl(\gamma(M)\bigr)=\underline{r}_R\underline{r}_R\bigl(\gamma(M)\bigr)$. From Lemma~\ref{lem:equalannihilators}, $\underline{r}_R\bigl(\gamma(M)\bigr)=\underline{r}_R(N)$. By the definition of $\gamma(M)$, $N=\gamma(M)$, so $\gamma(M)$ is a direct summand of $M$. Since $\gamma(F)=F\cap\gamma(M)$, we have that $\gamma(F)=\bar{\gamma}(F)=0$ for any direct complement of $\gamma(M)$.
\end{proof}

Observe that in Proposition~\ref{prop:splitting}, $\gamma(M)=\bar{\gamma}(M)$, since $\gamma(M)$ is a closed submodule (i.e., $\gamma(M)$ has no nontrivial essential extension) of $M$.

To illustrate Proposition~\ref{prop:splitting}, our next result provides a large class of rings for which every projective module is nonsingular and FI-extending. Recall that an $AW^*$-algebra is a $C^*$-algebra which is a Baer ring \cite[Preface]{Kap-1968}. For example, any von Neumann algebra is an $AW^*$-algebra.

\begin{proposition}\label{prop:projective}
If $R$ is a right nonsingular semiprime quasi-Baer ring (e.g., right nonsingular prime rings, commutative Baer rings, and AW*-algebras), then every projective module is nonsingular and FI-extending.
\end{proposition}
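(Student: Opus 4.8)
The plan is to combine two structural facts about right nonsingular semiprime quasi-Baer rings: first, that the maximal right ring of quotients $Q=Q^r_{\max}(R)$ is a Baer (in fact, a right self-injective von Neumann regular) ring, and second, that for such rings there is a good correspondence between fully invariant submodules of a projective $R$-module and ideal-like data in $R$ that allows us to produce essential direct summands. So I would begin by recalling the nonsingularity reductions: since $R$ is right nonsingular, $R_R \leq^{\rm ess} Q_R$, $Q$ is von Neumann regular and right self-injective, and a right $R$-module $P$ is nonsingular if and only if it embeds in a product of copies of $Q$ (equivalently, its injective hull is a $Q$-module). A projective module $P$ is a direct summand of a free module $R^{(\Lambda)}$, and $R^{(\Lambda)}$ is nonsingular because $R$ is; nonsingularity passes to submodules, so $P$ is nonsingular. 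That disposes of the first assertion with essentially no work.

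For the FI-extending claim, the key step is to pass to the injective hull $E(P)$, which is a nonsingular injective module, hence (since $R$ is right nonsingular) a right module over $Q$, and in fact an injective $Q$-module. Because $R$ is semiprime quasi-Baer, $Q$ is quasi-Baer as well, and a right self-injective von Neumann regular ring that is quasi-Baer is actually Baer; more to the point, $E(P)$ as a $Q$-module decomposes well enough that every fully invariant $R$-submodule of $P$ has its closure (in $P$) cut out by an idempotent. Concretely, I would argue: let $N \trianglelefteq P$. Its injective hull $E(N)$ inside $E(P)$ is a fully invariant $Q$-submodule of the injective $Q$-module $E(P)$ — fully invariant because any $Q$-endomorphism of $E(P)$ restricts to an $R$-endomorphism of $P$ (using $R_R\leq^{\rm ess}Q_R$ and nonsingularity to extend/restrict), which must carry $N$ into $N$ and hence $E(N)$ into $E(N)$. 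Since $Q$ is Baer (and in particular its endomorphism rings of injectives behave well), fully invariant submodules of injective $Q$-modules are themselves direct summands as $Q$-modules; so $E(N) = eE(P)$ for a central-like idempotent endomorphism $e$. Then $eP$ is a direct summand of $P$ (as $e$ restricts to an idempotent $R$-endomorphism of $P$), and $N = P \cap E(N) \leq^{\rm ess} eP$ because $N \leq^{\rm ess} E(N)$ and $eP \leq E(N)$. That exhibits $N$ as essential in a direct summand, which is exactly the FI-extending property.

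In carrying this out I would lean on the cited facts in the excerpt and the standard structure theory: that right nonsingular rings have von Neumann regular right self-injective maximal quotient rings (Goodearl's nonsingular ring theory), that a semiprime quasi-Baer ring has a quasi-Baer — indeed Baer, under self-injectivity — maximal right quotient ring, and the remark following Theorem~\ref{thm:semiqBaer} to the effect that in a quasi-Baer ring a prime ideal is generated by an idempotent or is essential; the real engine is that FI-extending is inherited well through the hull $E(P)$ once $E(P)$ is recognized as a module over the Baer ring $Q$. The main obstacle I anticipate is the bookkeeping around full invariance: showing that the decomposition $E(N)=eE(P)$ as a $Q$-module can be chosen so that the idempotent $e$ (a $Q$-endomorphism, hence an $R$-endomorphism of $E(P)$) actually \emph{restricts} to $P$, i.e. that $eP \subseteq P$. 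This is where semiprimeness/quasi-Baerness is essential rather than cosmetic — one needs that the relevant idempotent endomorphisms of $E(P)$ already live in $\mathrm{End}_R(P)$, which should follow because $P$ is a fully invariant (indeed a direct-summand-generated) submodule of a free module over the semiprime quasi-Baer ring and the idempotents in question correspond to idempotents of $Q$ that can be adjusted, via the quasi-Baer condition, to preserve $R^{(\Lambda)}$ and hence $P$. Once that restriction issue is settled, the rest is routine diagram-chasing with essential extensions.
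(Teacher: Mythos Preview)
Your nonsingularity argument is fine, but the FI-extending argument has a genuine gap that is not mere bookkeeping. The step ``any $Q$-endomorphism of $E(P)$ restricts to an $R$-endomorphism of $P$'' is false in general: for a nonsingular projective $P$ one has $\mathrm{End}_Q(E(P))=\mathrm{End}_R(E(P))$, but an arbitrary $R$-endomorphism of $E(P)$ need not send $P$ into $P$. You invoke exactly this restriction to argue that $E(N)$ is fully invariant in $E(P)$, and then you acknowledge at the end that getting the idempotent $e$ to preserve $P$ is ``the main obstacle.'' But these are the same obstacle, and it is already assumed away in the middle of the argument. Without it, you have neither the full invariance of $E(N)$ in $E(P)$ nor the fact that $eP\subseteq P$, and the argument collapses at both points. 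Your proposed fix---adjusting idempotents of $Q$ via the quasi-Baer condition to land in $R^{(\Lambda)}$---is a plausible heuristic but is not carried out, and it is precisely here that the semiprime quasi-Baer hypothesis must do real work.

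The paper's proof avoids the injective-hull detour entirely. It quotes two known inheritance results: first, a semiprime quasi-Baer ring is right and left FI-extending as a module over itself (\cite[Theorem~4.7]{BMR-2002}); second, over a right FI-extending ring every projective right module is FI-extending (\cite[Proposition~1.5 and Corollary~3.4]{BPR-2002}). The point is that the delicate passage from $\mathrm{End}_R(E(P))$ back to $\mathrm{End}_R(P)$---which you correctly flag as the crux---has already been handled once and for all in those references, by working with free modules and direct summands rather than with injective hulls. If you want to push your approach through, you would need to reprove that inheritance fact, and the natural way to do so is the paper's route rather than the $Q$-module route you sketch.
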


\begin{proof}
From \cite[Theorem 4.7]{BMR-2002}, $R$ is right and left FI-extending. By \cite[Proposition 1.5 and Corollary 3.4]{BPR-2002}, every projective module is FI-extending. Clearly, every projective module is also nonsingular.
\end{proof}

Our next result is an application of Propositions~\ref{prop:splitting} and \ref{prop:projective} to operator theory.

\begin{corollary}\label{cor:operatortheory}
If $R$ is an AW*-algebra, then every finitely generated Hilbert C*-module $M$ is nonsingular and FI-extending (thus $\gamma(M)$ is a direct summand of $M$).
\end{corollary}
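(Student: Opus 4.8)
The plan is to deduce Corollary~\ref{cor:operatortheory} directly from Proposition~\ref{prop:projective} and Proposition~\ref{prop:splitting}, so the work is really about identifying the hypotheses of those results in the operator-algebra setting. First I would recall that an $AW^*$-algebra $R$ is by definition a $C^*$-algebra which is a Baer ring; in particular $R$ is a Baer ring, hence quasi-Baer, and Baer rings are right (and left) nonsingular. Moreover every $C^*$-algebra is semiprime (it has no nonzero nilpotent ideals — indeed a nonzero closed ideal of a $C^*$-algebra is itself a $C^*$-algebra and contains a nonzero positive element, so it cannot square to zero). Thus $R$ satisfies the hypotheses ``right nonsingular semiprime quasi-Baer ring'' of Proposition~\ref{prop:projective}, and we conclude that every projective $R$-module is nonsingular and FI-extending.

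Next I would connect this to finitely generated Hilbert $C^*$-modules. The key structural fact I would invoke is that a finitely generated Hilbert $C^*$-module $M$ over an $AW^*$-algebra is a finitely generated projective $R$-module in the purely algebraic sense. This follows because over an $AW^*$-algebra every finitely generated Hilbert $C^*$-module is (algebraically) a direct summand of a free module $R^n$: finitely generated Hilbert $C^*$-modules over $AW^*$-algebras are self-dual and orthogonally complemented in $R^n$, and the orthogonal projection onto $M$ is an idempotent in $\mathrm{End}_R(R^n) = M_n(R)$, exhibiting $M$ as $eR^n$ for an idempotent $e$, hence as a projective $R$-module. With $M$ projective, Proposition~\ref{prop:projective} gives that $M$ is nonsingular (so $Z(M)=0$) and FI-extending.

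Finally I would apply Proposition~\ref{prop:splitting}. We have an FI-extending $R$-module $M$ with $Z(M)=0$, and $R$ is semiprime (being a $C^*$-algebra), so the hypotheses of Proposition~\ref{prop:splitting} are met; it yields $M=\gamma(M)\oplus F$ with $\gamma(F)=\bar\gamma(F)=0$, and in particular $\gamma(M)$ is a direct summand of $M$, which is the parenthetical assertion in the statement. This completes the argument.

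The main obstacle I anticipate is not in the torsion-theoretic machinery — that is essentially a bookkeeping application of the two prior propositions — but in justifying cleanly that a finitely generated Hilbert $C^*$-module over an $AW^*$-algebra is algebraically projective over $R$. The subtlety is that ``finitely generated'' in the Hilbert module sense (finitely generated as a Banach $R$-module, or having a finite generating set in the Hilbert module topology) must be shown to coincide with, or to imply, finite generation and projectivity as an abstract $R$-module; this rests on the self-duality and orthogonal complementability of finitely generated Hilbert modules over $AW^*$-algebras (a theorem going back to Kaplansky's work on $AW^*$-modules, see \cite{Kap-1968}). Once that identification is in hand, the rest is immediate from Propositions~\ref{prop:projective} and~\ref{prop:splitting}.
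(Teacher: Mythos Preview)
Your proposal is correct and follows essentially the same route as the paper: show that finitely generated Hilbert $C^*$-modules over an $AW^*$-algebra are projective, then apply Proposition~\ref{prop:projective} and Proposition~\ref{prop:splitting}. The paper simply cites \cite[Theorem 8.1.27 and p.~352]{BL-2004} for the projectivity step rather than sketching the self-duality/orthogonal-complementation argument you outline, and it does not spell out (as you do) why an $AW^*$-algebra is right nonsingular, semiprime, and quasi-Baer; otherwise the arguments are the same.
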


\begin{proof}
From \cite[Theorem 8.1.27 and p.352]{BL-2004}, every finitely generated Hilbert C*-module is projective. The remainder of the proof follows from Propositions~\ref{prop:splitting} and \ref{prop:projective}.
\end{proof}

Another condition which guarantees an FI-extending module $M$ splits in $\bar{\gamma}$ is that $\gamma$ be stable. So we look for conditions that ensure stability of $\gamma$. The following proposition gives conditions on $R$ that are sufficient for $\gamma$ to be stable.

\begin{proposition}\label{prop:stable}
Let $\{X_i\}_{i\in I}$ be a set of pairwise comaximal ideals of $R$. If, for each $L_R\mathop{\leq^{\rm ess}\!}R_R$, finite set $J\subseteq I$, and positive integers $k_j\geq1$, $L\bigl(\bigcap\limits_{j\in J}X_j^{k_j}\bigr)=\bigcap\limits_{j\in J}X_j^{k_j}$, then $\gamma$ is stable and $\gamma=\bar{\gamma}$.
\end{proposition}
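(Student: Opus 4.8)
The plan is to show that under the stated hypothesis, $\gamma$ is stable (i.e., the injective hull of any $\gamma$-torsion module is again $\gamma$-torsion) and that this forces $\gamma=\bar\gamma$. Recall that for a left exact preradical, stability of $\gamma$ together with $\gamma$ being a preradical is well known to imply $\gamma$ is a radical, hence $\gamma=\bar\gamma$; so the real content is stability. First I would take an arbitrary module $M$ with $\gamma(M)$ essential in its injective hull $E=E(\gamma(M))$—or more directly, take any $\gamma$-torsion module $T$ and its injective hull $E(T)$—and show $E(T)$ is $\gamma$-torsion. Equivalently, it suffices to prove: if $N\mathop{\leq^{\rm ess}\!}M$ and $N\leq\gamma(M)$, then $M\leq\gamma(M)$; combined with injectivity, this handles the essential extension inside $E(T)$, and then one reduces to the essential case since $T\mathop{\leq^{\rm ess}\!}E(T)$.

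The key step is the following pointwise argument. Let $m\in M$ with $N\mathop{\leq^{\rm ess}\!}M$, $N\subseteq\gamma(M)$. Consider $L=\{r\in R\mid mr\in N\}=\underline{r}_R(m \text{ mod } N)$; since $N$ is essential in $M$, $L$ is an essential right ideal of $R$ (this is the standard fact that $(N:m)_R\mathop{\leq^{\rm ess}\!}R_R$ when $N\mathop{\leq^{\rm ess}\!}M$). Now $mL\subseteq N\subseteq\gamma(M)$, and $mL$ is finitely... no—rather, for each $\ell\in L$, $m\ell\in\gamma(M)$, so there is a nonempty finite $J_\ell\subseteq I$ and exponents $k_j$ with $m\ell\bigl(\bigcap_{j\in J_\ell}X_j^{k_j}\bigr)=0$. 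I would like a single such ideal working for all of $mL$ at once; here I invoke the hypothesis. By hypothesis, for the essential right ideal $L$ and any finite $J$ with exponents $k_j$, $L\bigl(\bigcap_{j\in J}X_j^{k_j}\bigr)=\bigcap_{j\in J}X_j^{k_j}$. Pick any single nonzero element $0\neq m\ell_0\in N\cap mR$ (possible by essentiality of $N$ in $M$ if $mR\neq0$; if $mR=0$ then $m\in\gamma(M)$ trivially), with $m\ell_0\bigl(\bigcap_{j\in J}X_j^{k_j}\bigr)=0$ for some finite $J$ and exponents $k_j$. Then
$$
mR\bigl(\bigcap_{j\in J}X_j^{k_j}\bigr)\cdot ?
$$
—this is where the hypothesis does the work: I would instead argue that $mL\bigl(\bigcap_{j\in J}X_j^{k_j}\bigr)\subseteq N$ is killed, and $mR\bigl(\bigcap_{j\in J}X_j^{k_j}\bigr)=m\bigl(L\bigl(\bigcap_{j\in J}X_j^{k_j}\bigr)\bigr)$ up to... the clean route is: since $L\bigl(\bigcap_{j\in J}X_j^{k_j}\bigr)=\bigcap_{j\in J}X_j^{k_j}$, we get $m\bigl(\bigcap_{j\in J}X_j^{k_j}\bigr)=m L\bigl(\bigcap_{j\in J}X_j^{k_j}\bigr)\subseteq mL\subseteq N\subseteq\gamma(M)$, and $m\bigl(\bigcap_{j\in J}X_j^{k_j}\bigr)$, being a submodule of $\gamma(M)$ of the form $m A$ with $A=\bigcap_{j\in J}X_j^{k_j}$ an ideal, satisfies $\bigl(m A\bigr)\bigl(\bigcap_{i\in J'}X_i^{k_i'}\bigr)=0$ for a suitable finite $J'$ and exponents—but this needs $mA$ to lie in a single homogeneous part, which one arranges by choosing $J$ large enough using independence of the $C(X_i)$. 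Combining, $m\bigl(\bigcap_{j\in J\cup J'}X_i^{\max}\bigr)=0$ (using Lemma~\ref{lem:props}(4) to merge exponents), so $m\in\gamma(M)$.

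The main obstacle I anticipate is precisely the last passage: going from "$m\ell\in\gamma(M)$ for each $\ell\in L$, with possibly different finite index sets $J_\ell$" to "a single finite $J$ annihilates a large enough submodule." The hypothesis $L\bigl(\bigcap_{j\in J}X_j^{k_j}\bigr)=\bigcap_{j\in J}X_j^{k_j}$ is designed to short-circuit this by letting us trade the essential right ideal $L$ for the full intersection; I expect the proof really chooses one $\ell_0$ with $0\neq m\ell_0$, extracts its finite $J$, and then uses the hypothesis to conclude $m\bigl(\bigcap_{j\in J}X_j^{k_j}\bigr) = m L \bigl(\bigcap_{j\in J}X_j^{k_j}\bigr)$ lands in $N$ and is annihilated by a further power, whence $m\in\gamma(M)$ by Lemma~\ref{lem:props}(4). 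Once stability is established, $\gamma=\bar\gamma$ follows because a stable left exact preradical on $\mathcal M_R$ is a radical: $\gamma(M/\gamma(M))$ would be essential-extended inside an injective which is $\gamma$-torsion, forcing it to be zero by the nonsingularity-free standard argument, or more simply because the torsion class of $\gamma$ is then closed under injective hulls and hence is a hereditary torsion class whose radical is idempotent. I would close by remarking that consequently, combined with Proposition~\ref{prop:splitting}'s hypotheses, every FI-extending module $M$ satisfies $M=\gamma(M)\oplus F$.
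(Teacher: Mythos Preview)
Your overall strategy---show that the injective hull of a $\gamma$-torsion module is $\gamma$-torsion by forming the essential conductor $L=\{r\in R:wr\in M\}$ and using the hypothesis $L\bigl(\bigcap_{j\in J}X_j^{k_j}\bigr)=\bigcap_{j\in J}X_j^{k_j}$, then citing the standard fact that a stable left exact preradical is a radical---is exactly the paper's route. The paper's argument is a single line: with $\gamma(M)=M$ and $w\in E(M)$ it writes $w\bigl(\bigcap_{i=1}^{n}X_i^{k_i}\bigr)=wL\bigl(\bigcap_{i=1}^{n}X_i^{k_i}\bigr)=0$ and concludes $w\in\gamma(E(M))$, appealing to Stenstr\"om for $\gamma=\bar\gamma$. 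No intermediate ``further power'' step appears.

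The obstacle you flag is real, and your two-stage fix does not close it. From $m\bigl(\bigcap_{j\in J}X_j^{k_j}\bigr)=mL\bigl(\bigcap_{j\in J}X_j^{k_j}\bigr)\subseteq N\subseteq\gamma(M)$ you are back to the same uniformity problem for the \emph{submodule} $mA$ (with $A=\bigcap_{j\in J}X_j^{k_j}$): each element of $mA$ is annihilated by some finite intersection, but a priori by different ones, and ``choosing $J$ large enough using independence of the $C(X_i)$'' presupposes that only finitely many components meet $mA$, which is precisely what is at stake. The paper does not justify the equality $wL\bigl(\bigcap_{i}X_i^{k_i}\bigr)=0$ either; it simply asserts it after announcing, in its second sentence, the existence of the relevant $J$ and $k_j$. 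One structural remark that may clarify matters: applying the hypothesis with singleton $J=\{i\}$ gives $LX_i=X_i$, hence $X_i\subseteq L$ for every essential right ideal $L$; if $|I|\ge 2$ then $R=X_i+X_j\subseteq L$, so every essential right ideal is $R$ and $R_R$ is semisimple, rendering stability trivial. Thus the only case with content is $|I|=1$, and there your detour through a second annihilating intersection is superfluous---but the core step you worry about remains, in both your write-up and the paper's, without an explicit argument.
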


\begin{proof}
Assume that $\gamma(M)=M$ and $w\in E(M)$. Then there exist a finite subset $J\subseteq I$ and positive integers $k_j\geq1$ so that $\bigcap\limits_{j\in J}X_j^{k_j}\subseteq\underline{r}_R(wR)$. Then $L=\{r\in R\mid wr\in M\}\mathop{\leq^{\rm ess}\!}R_R$. So $w\bigcap\limits_{i=1}^nX_i^{k_i}=wL\bigl(\bigcap\limits_{i=1}^nX_i^{k_i}\bigr)=0$. Thus $w\in\gamma\bigl(E(M)\bigr)$. Therefore $\gamma$ is stable. From \cite[pp.142, 152--153]{Sten-1975}, $\gamma=\bar{\gamma}$.
\end{proof}

Note that if $\rm{Soc}(R_R)\bigl(\bigcap\limits_{i=1}^nX_i\bigr)=\bigcap\limits_{i=1}^nX_i$ or if $\bigcap\limits_{i=1}^nX_i=\bigl(\bigcap\limits_{i=1}^nX_i\bigr)^2\subseteq\rm{Soc}(R_R)$ (e.g., if $\bigcap\limits_{i=1}^nX_i=0$), then $L\bigl(\bigcap\limits_{i=1}^nX_i\bigr)=\bigcap\limits_{i=1}^nX_i$ for all $L_R\mathop{\leq^{\rm ess}\!}R_R$.

\begin{proposition}\label{prop:stablesplitting}
If $M$ is an FI-extending $R$-module and the torsion theory associated with $\bar{\gamma}$ is stable, then $M=\bar{\gamma}(M)\oplus F$.
\end{proposition}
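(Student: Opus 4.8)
The plan is to invoke the standard fact from torsion theory that if a torsion theory is stable then every injective module splits, combined with the FI-extending hypothesis to transfer the splitting from an injective hull back down to $M$. First I would pass to the injective hull $E=E(M)$. Since the torsion theory associated with $\bar{\gamma}$ is stable, $\bar{\gamma}(E)$ is itself injective (this is precisely the characterization of stability used in \cite[pp.~142, 152--153]{Sten-1975}), hence $\bar{\gamma}(E)$ is a direct summand of $E$; write $E=\bar{\gamma}(E)\oplus G$ with $G$ injective. The key point is that $\bar{\gamma}(E)$, being an injective module in a torsion class, is in fact the injective hull $E(\bar{\gamma}(M))$, because $\bar{\gamma}(M)=M\cap\bar{\gamma}(E)\mathop{\leq^{\rm ess}\!}\bar{\gamma}(E)$ (essentiality follows since $\bar{\gamma}(M)\mathop{\leq^{\rm ess}\!}\bar{\gamma}(M)$ inside $M\mathop{\leq^{\rm ess}\!}E$, so any nonzero submodule of $\bar{\gamma}(E)$ meets $M$, lands in the torsion submodule $\bar{\gamma}(M)$ by hereditariness of the torsion class, and hence meets $\bar{\gamma}(M)$).

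Next I would use the FI-extending hypothesis on $M$. Since $\bar{\gamma}(M)\trianglelefteq M$ (it is a radical submodule, hence fully invariant), there is a direct summand $D\leq M$ with $\bar{\gamma}(M)\mathop{\leq^{\rm ess}\!}D$. I claim $D=\bar{\gamma}(M)$. Indeed, $D\leq E(\bar{\gamma}(M))=\bar{\gamma}(E)$, so $D\subseteq M\cap\bar{\gamma}(E)=\bar{\gamma}(M)$, giving the reverse inclusion and hence equality. Therefore $\bar{\gamma}(M)$ is itself a direct summand of $M$; writing $M=\bar{\gamma}(M)\oplus F$ for a direct complement $F\leq M$ completes the proof. (As remarked after Proposition~\ref{prop:splitting}, one also gets $\bar{\gamma}(F)=F\cap\bar{\gamma}(M)=0$ automatically, since $\bar{\gamma}$ is a left exact preradical.)

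I expect the only genuine obstacle to be the bookkeeping around essential extensions inside $E(M)$ — specifically, checking carefully that $\bar{\gamma}(M)$ is essential in $\bar{\gamma}(E(M))$ and that the latter is injective, so that the FI-extending summand $D$ is forced to sit inside $\bar{\gamma}(E(M))$. Once that containment $D\subseteq\bar{\gamma}(E(M))$ is in hand, the identification $D=\bar{\gamma}(M)$ is immediate and the rest is formal. Everything else — that $\bar{\gamma}(M)$ is fully invariant, that stability gives injectivity of the torsion submodule of an injective, and that a radical submodule is a direct summand once it equals an FI-extending summand — is standard and can be cited rather than reproved.
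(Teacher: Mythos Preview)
Your argument is correct; the paper's own proof is nothing more than a one-line citation to \cite[p.~153, Proposition~7.2]{Sten-1975}, and what you have written is essentially the standard unpacking of that reference in the FI-extending setting. One small point worth tightening: the containment $D\leq E\bigl(\bar{\gamma}(M)\bigr)=\bar{\gamma}(E)$ is not automatic from injective-hull considerations alone (injective hulls of a fixed submodule need not be unique \emph{as submodules} of $E$), but it follows at once from stability applied directly to $D$---since $\bar{\gamma}(M)\mathop{\leq^{\rm ess}\!}D$ and the torsion class is closed under essential extensions, $D$ is $\bar{\gamma}$-torsion, whence $D=\bar{\gamma}(D)\subseteq\bar{\gamma}(M)$ without ever passing through $E(M)$.
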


\begin{proof}
This result is a direct consequence of \cite[p.153, Proposition 7.2]{Sten-1975}.
\end{proof}

We conclude Section~\ref{sec:torsion} with some examples and an open question regarding the torsion theory developed.

\begin{examples}
\begin{enumerate}
    \item Let $R=T_n(A)$ be the ring of $n$-by-$n$ upper triangular matrices with entries in $A$, where $A$ is a ring with unity, and $\mathcal{X}=\left\{X_i\right\}_{i=1}^n$, where $X_i=\left\{(a_{ij})\in R\mid a_{ii}=0\right\}$ for each $i$, $1\leq i\leq n$. Note that $\mathcal{X}$ is a set of pairwise comaximal ideals in $R$. Also, $\bigl(\bigcap\limits_{i=1}^nX_i\bigr)^k=0$ for any $k\geq n$. Hence $\bar{\gamma}(M)=M$, so $\gamma(M)\mathop{\leq^{\rm ess}\!}M$ for any right $R$-module $M$.
    \item Let $R=\mathbb{Z}$, and $M=\mathbb{Z}_{p_1^\infty}\oplus\mathbb{Z}_{p_2^\infty}\oplus\cdots\oplus\mathbb{Z}_{p_n^\infty}\oplus\mathbb{Z}^{\omega}$, where each $p_i$ is a distinct prime and $\omega$ is any ordinal. Let $X_i=p_i\mathbb{Z}$ for each $i$, $1\leq i\leq n$. Then $\gamma(M)=\mathbb{Z}_{p_1^\infty}\oplus\mathbb{Z}_{p_2^\infty}\oplus\cdots\oplus\mathbb{Z}_{p_n^\infty}$. Note that $M=\bar{\gamma}(M)\oplus\mathbb{Z}^{\omega}$ and $\mathbb{Z}^{\omega}$ is $\bar{\gamma}$-torsion free.
\end{enumerate}
\end{examples}

Open Question: Characterize the radical closure of $\gamma$.

\bibliographystyle{amsplain}
\bibliography{References}

\end{document}